\documentclass[11pt,leqno]{amsart}

\usepackage{amssymb}
\usepackage{enumerate}
\usepackage{microtype}
\usepackage[hidelinks]{hyperref}

\theoremstyle{plain}
\newtheorem{theorem}{Theorem}
\newtheorem{lemma}[theorem]{Lemma}
\newtheorem{proposition}[theorem]{Proposition}
\newtheorem{corollary}[theorem]{Corollary}
\theoremstyle{definition}
\newtheorem{definition}[theorem]{Definition}
\renewcommand{\emptyset}{\varnothing}

\DeclareMathOperator{\height}{height}
\newcommand{\setm}[2]{\{#1\mid #2\}}
\DeclareMathOperator{\Tow}{Tow}
\DeclareMathOperator{\Max}{Max}
\DeclareMathOperator{\F}{F}
\DeclareMathOperator{\did}{did}
\DeclareMathOperator{\mult}{mult}
\DeclareMathOperator{\comp}{Comp}
\DeclareMathOperator{\Con}{Con}
\newcommand{\one}{\mathbf{1}}
\newcommand{\N}{\mathbb{N}}
\newcommand{\Nzero}{\mathbb{N}_0}
\DeclareMathOperator{\DirInd}{DirInd}
\DeclareMathOperator{\MaxInt}{MaxInt}
\DeclareMathOperator{\Der}{Der}
\DeclareMathOperator{\Red}{Red}
\DeclareMathOperator{\m}{m}
\DeclareMathOperator{\p}{p}
\DeclareMathOperator{\Z}{Z}

\hypersetup{
  pdftitle={The A-to-A~problem for finite ordered sets},
  pdfauthor={Jonathan David Farley and George Gr\"atzer}
}

\begin{document}

\title[The $A^A$ problem for finite ordered sets]
  {The $A^A$ problem for finite ordered sets}

\author[J.\,D. Farley]{Jonathan David Farley}
\address{Department of Mathematics, Morgan State University,
  1700 E. Cold Spring Lane, Baltimore, MD 21251, USA}
\email{lattice.theory@gmail.com}

\author[G. Gr\"atzer]{George Gr\"atzer}
\address{Department of Mathematics, University of Manitoba,
  Winnipeg, MB R3T 2N2, Canada}
\email{gratzer@mac.com}

\date{\today}
\subjclass[2020]{06A06}
\keywords{Finite ordered set, order-preserving map, exponentiation,
  reconstruction}

\begin{abstract}
For ordered sets $X$ and $Y$, let $Y^X$ denote the ordered set of
order-preserving maps from $X$ to $Y$, ordered pointwise.
In 1984, D. Duffus asked whether $A^A\cong B^B$ forces $A\cong B$
when $A$ and $B$ are finite ordered sets.
We prove that it does. The proof combines direct decompositions with
an auxiliary ordering of connected ordered sets and a careful choice
of intervals of maximum height.
We then extend the reconstruction result to several forms of repeated
exponentiation.
\end{abstract}

\maketitle

\section{Introduction}\label{S:Introduction}
\subsection{The result}

Following \cite{GRAbook}, we take an ordered set to be nonempty.

For ordered sets $X$ and $Y$, we order the set $Y^X$ of order-preserving
maps from $X$ to $Y$ pointwise:
\[
   f\leq g \quad\text{if}\quad f(x)\leq g(x)
   \text{ for all }x\in X.
\]
D. Duffus \cite[p.~90]{DufHD} asked whether the ordered set $A^A$
determines a~finite ordered set $A$.
The following theorem answers his question affirmatively.

\begin{theorem}\label{T:main}
Let $A$ and $B$ be finite ordered sets.
If $A^A\cong B^B$, then $A\cong B$.
\end{theorem}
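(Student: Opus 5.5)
Write $P=A^A\cong B^B$. The plan has two stages: first recover from $P$ some numerical invariants of $A$ (cardinality, height, and information about connected components), and then use direct product decompositions to pin $A$ down up to isomorphism.

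\emph{Stage one: invariants.} The constant maps are exactly the elements of $A^A$ lying in the image of the diagonal $A\to A^A$, but they cannot be detected intrinsically, so we use chains instead. The height of $Y^X$ is controlled by $X$ and $Y$; the maximal chains of $A^A$ are obtained by raising the value at one point at a time. Concretely, I expect
\[
\height(Y^X)=|X|\cdot\height(Y)
\]
for connected $X$. The maximal intervals $[f,g]$ of maximum height in $P$ are the objects to study: take $f$ a minimal and $g$ a maximal element, joined by a chain of maximum length. If $A$ is connected, such intervals should be isomorphic to $\mathbf{h}^{A}$-type products, where $\mathbf{h}$ is a chain of length $\height A$. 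From them we read off $|A|$ and $\height A$, and then compare with the same data for $B$.

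\emph{Stage two: decompositions.} Write $A=A_1+\dots+A_k$ as a disjoint sum of connected components. Then
\[
A^A\cong\prod_i A^{A_i},
\]
and each $A^{A_i}$ is connected, since constant maps link everything when $A$ is connected; in general we have to handle each component. Apply unique factorization of finite connected ordered sets into connected indecomposables (Hashimoto's refinement theorem). To compare factors $A^{A_i}$ from both sides, introduce an auxiliary ordering of connected ordered sets (for example by size, then height, then a lexicographic comparison of a canonical invariant) so that the ``largest'' factor can be matched on both sides. Once the number and shapes of the components are matched, the reconstruction of the components themselves should follow by cancellation, using $X^{Y}\cong X^{Z}$ implies $Y\cong Z$ for suitable $X$ (a Lovász-style counting argument over homomorphisms from all finite ordered sets $T$, using $|\hom(T,X^Y)|=|\hom(T\times Y,X)|$).

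The key step is the counting step. If $A^A\cong B^B$, then for every finite $T$ we have
\[
|\hom(T\times A,A)|=|\hom(T\times B,B)|,
\]
but the bases differ, so Lovász cancellation does not apply directly. I would first prove $|A|=|B|$ and equal heights from the maximum-height intervals. Then I would isolate a factor of $A^A$ determined intrinsically by a maximum-height interval, arranged to be isomorphic to a power of a chain or of a component of $A$, which converts the problem into a same-base comparison where Lovász's theorem applies. This choice of interval, guaranteeing that it lies inside a single indecomposable factor and reflects a specific component, is where I expect the main difficulty, and the auxiliary ordering is meant to break ties there.
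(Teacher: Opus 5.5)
Your proposal has a genuine gap. The step that actually separates the exponent $A$ from the base is deferred as ``where I expect the main difficulty,'' and the concrete claims meant to support it do not hold.

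First, $A^{A_i}$ need not be connected when $A$ is connected. Let $A$ be a crown and let $f\geq\mathrm{id}$ be order-preserving. Then $f$ fixes every maximal element. It also fixes every minimal element $a$, since $f(a)\geq a$ and $f(a)\leq f(b)=b$ for both upper covers $b$ of $a$. The dual argument applies to $f\leq\mathrm{id}$, so the identity is an isolated point of $A^A$. This is why the paper works with $\Con(P,Y)$ and with bounded intervals $I$, for which $\Con(I,Y)=I^Y$.

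Second, the maximum-height intervals of $A^A$ are the products $\prod_i I_i^{A_i}$ with $I_i\in\MaxInt(A)$. They are not powers of a chain; take $A=\mathbf 2\times\mathbf 2$. So no same-base comparison $X^A\cong X^B$ ever appears. Nor can you read off $|A|$ and $\height(A)$ separately, since the height only gives $\height(A^A)=|A|\height(A)$.

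Third, the tie-breaking ordering must satisfy $P\prec Q\Rightarrow P\times R\prec Q\times R$ and $P\prec Q\Rightarrow\Con(P,Y)\prec\Con(Q,Y)$. Only then does the canonical choice obey $\Max(X^Y)\cong\Max(X)^Y$, and comparing sizes first breaks this. For example, let $Q$ consist of a $2$-element chain and five isolated points placed between a new bottom and a new top. Then $X=\mathbf 2^{\times3}+Q$ has exactly two maximum-height intervals, $\mathbf 2^{\times3}$ and $Q$, with $8<9$ elements. For the $10$-element chain $Y$, however, $|(\mathbf 2^{\times3})^Y|=1331>561=|Q^Y|$, so $\Max(X^Y)\not\cong\Max(X)^Y$.

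The missing idea is how to extract $A$ from $\Max(A^A)\cong\Max(A)^A=\Con(L,A)$ when $L=\Max(A)$ is itself unknown. The paper uses McKenzie's normal form $D\cong\Con(E_D,F_D)$ for each directly indecomposable factor, with $E_D$ absolutely $\Con$-indecomposable and the pair unique. It sets $\Tow(P)=\sum_j F_{D_j}$. Then $\Tow(\Con(P,Y))\cong Y\times\Tow(P)$, so $U=\Tow(\Max(A^A))\cong\Tow(L)\times A$, and the exponent becomes a direct factor.

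The unknown cofactor $R=\Tow(L)$ is then recovered intrinsically, using the height drop $\height(\Tow(P))<\height(P)$. Indeed $V=\Max(U)\cong\Max(R)\times L$ and $\Tow(V)\cong\Tow(\Max(R))+R$, so $\Max(\Tow(V))\cong\Max(R)$. Cancellation yields $L$, hence $R$, and a last cancellation in $U\cong R\times A$ yields $A$. Antichains are treated separately by counting, since there $\Max(A)\cong\one$ carries no information.

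Your plan has nothing playing the role of the normal form or of the height-drop step. The Lov\'asz-style exponent cancellation you invoke needs a common base, and your construction never produces one.
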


We review the history of the problem in Section~\ref{S:History}.

\subsection{Outline}
Section~\ref{S:basic} collects the preliminary facts.
In Sections~\ref{S:Lambda} and~\ref{S:ordering}, we define $\Tow$
and construct an auxiliary ordering. This ordering lets us select
a canonical interval $\Max(X)$ of maximum height; its key properties
are proved in Section~\ref{S:intervals}.
Section~\ref{S:reconstruction} uses these constructions to recover
$A$ from $A^A$ and prove Theorem~\ref{T:main}.
Section~\ref{S:iteration} extends the method to several iterated
exponentials and parenthesizations. Section~\ref{S:History} closes
with the problem's history.

\subsection{Use of AI} 
The paper grew out of notes that one author sent to the other. Three further rounds of revision brought the paper to completion. We then asked an AI system to check the proofs for correctness and suggest stylistic changes. We adopted several of its suggestions.

\subsection{Research support} 
Neither author received research support for this article.
\section{Some basic concepts and results}\label{S:basic}

Unless we say otherwise, all ordered sets are finite.
The auxiliary chains used to order isomorphism types in
Section~\ref{S:ordering} may be infinite.
An ordered set is \emph{nontrivial} if it has more than one element.
We call an ordered set \emph{connected}, if its comparability graph
is connected.
We write $\one$ for the one-element ordered set.
The sum $P+Q$ is the disjoint union, with no comparabilities between
the two summands. We use $\emptyset$ for the formal empty sum.
It contributes no summand to a~sum, and its product with an ordered
set is again the empty sum.
In indexed sums and products written using $\setm{u_i}{i\in I}$,
each index contributes a~summand or factor; repeated entries are
counted with their multiplicities.

\subsection{Products and height}

The \emph{height} $\height(P)$ of an ordered set $P$ is the largest
value of $|C|-1$, where $C$ is a~chain in $P$.
For the ordered sets $P$ and $Q$,
\begin{equation}\label{E:height-product}
\begin{aligned}
   \height(P\times Q)&=\height(P)+\height(Q),\\
   \height(P+Q)&=\max\{\height(P),\height(Q)\}.
\end{aligned}
\end{equation}
For the product formula, every strict step in a~chain increases at least
one coordinate, and the total number of strict increases is at most
$\height(P)+\height(Q)$. Interleaving chains of maximum length in the two factors
attains this bound. A~chain in a~sum lies in one summand, proving the
second formula.
For $a\leq b$, an interval is the suborder
$[a,b]=\{x\mid a\leq x\leq b\}$.
Every interval is bounded and connected.

A~product is connected if and only if both factors are connected.
Indeed, projection takes a~comparability path to a~comparability path
after repeated vertices are deleted. Conversely, paths in the two
factors can be followed one coordinate at a~time.

A~nontrivial ordered set $D$ is \emph{directly indecomposable}, if
$D\cong P\times Q$ implies that $P$ or $Q$ has one element.
By Hashimoto's Refinement Theorem \cite{HasEA}, a~connected ordered
set $P$ has a~decomposition
\begin{equation}\label{E:direct}
   P\cong D_1\times\cdots\times D_k
\end{equation}
into connected, nontrivial, directly indecomposable factors.
The factors are unique up to isomorphism and their order.
For $P=\one$, we use the empty product, so $k=0$.

We will also use cancellation for products, including products of
ordered sets that are not connected. This result appears in
L.~Lov\'asz \cite[(4.3)]{LovFG} and
R. McKenzie \cite[Lemma~2.1]{McKJJ}.

\begin{lemma}[Cancellation]\label{L:cancellation}
Let $P$, $Q$, and $R$ be finite ordered sets.
If $P\times R\cong Q\times R$, then $P\cong Q$.
\end{lemma}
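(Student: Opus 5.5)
We prove the cancellation law with Lov\'asz's homomorphism-counting method. For finite ordered sets $X$ and $Y$, let $h(X,Y)$ be the number of order-preserving maps $X\to Y$. Let $i(X,Y)$ be the number of injective order-preserving maps that are also order-embeddings, that is, maps with $f(x)\leq f(y)$ if and only if $x\leq y$. The plan has three steps. First, show that the function $X\mapsto h(X,Y)$ determines $Y$ up to isomorphism. Second, note that $h$ is multiplicative in the second argument. Third, show that $h(X,R)\neq 0$ for every $X$, so that the factor coming from $R$ can be divided out.

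For the multiplicativity, a map into $Y\times Z$ is order-preserving exactly when both coordinate maps are. Hence
\[
   h(X,Y\times Z)=h(X,Y)\,h(X,Z).
\]
If $P\times R\cong Q\times R$, then $h(X,P)h(X,R)=h(X,Q)h(X,R)$ for every finite $X$. Since all ordered sets are nonempty, a constant map to an element of $R$ is order-preserving, so $h(X,R)\geq 1$. Therefore $h(X,P)=h(X,Q)$ for all finite ordered sets $X$.

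The main step is to recover $Y$ from the numbers $h(X,Y)$. Every order-preserving map $f\colon X\to Y$ factors through its image. Give the image the order that is the transitive closure of the relations $f(x)\leq f(x')$ for $x\leq x'$. This is an antisymmetric ordering because it is contained in the order of $Y$. We then get a surjective order-preserving map of $X$ onto some ordered set $Z$, followed by a map of $Z$ into $Y$ that is injective and order-preserving, though not necessarily an embedding.

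To make this a clean triangular system, I would count in two stages.
\begin{enumerate}
\item Let $\mathrm{inj}(X,Y)$ count the injective order-preserving maps. Inclusion--exclusion over the partitions of $X$, i.e. M\"obius inversion on the partition lattice, gives $\mathrm{inj}(X,Y)$ as an integer combination of the numbers $h(X/\pi,Y)$. Here $X/\pi$ ranges over the quotient preorders of $X$, with each preorder replaced by its associated ordered set when that quotient is antisymmetric. When the quotient is not antisymmetric, the maps constant on its cycles are still counted by $h$ of the collapsed ordered set, so the expression stays in terms of $h(\,\cdot\,,Y)$.
\item An injective order-preserving map $X\to Y$ is an embedding of the extension of $X$ obtained by pulling back the order of $Y$. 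So $\mathrm{inj}(X,Y)=\sum_{X'}i(X',Y)$, where $X'$ runs over the orders on the underlying set of $X$ that extend the order of $X$. Inverting this by induction on the number of comparable pairs, from the most to the fewest, recovers $i(X,Y)$ from the numbers $\mathrm{inj}(\,\cdot\,,Y)$.
\end{enumerate}
Both inversions are finite because $X$ is finite. Together they show that $h(\,\cdot\,,P)=h(\,\cdot\,,Q)$ implies $i(X,P)=i(X,Q)$ for all $X$.

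Finally, take $X=P$. Then $i(P,Q)=i(P,P)\geq 1$, so $P$ embeds in $Q$. Symmetrically $Q$ embeds in $P$. Hence $|P|=|Q|$, and the two embeddings are bijections that preserve the relation. Since the comparable pairs are then equinumerous, the embedding is an isomorphism, and $P\cong Q$.

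I expect the main obstacle to be the bookkeeping in the first inversion, namely making sure that quotients of ordered sets that fail antisymmetry are handled correctly. If this becomes messy, I will use a simpler route. Define $\mathrm{sur}(X,Y)$ as the number of strong surjections, and write $h(X,Y)=\sum_Z \mathrm{sur}(X,Z)\,i(Z,Y)/|\mathrm{Aut}(Z)|$. Here $Z$ runs over isomorphism types of induced images, each given the order induced from $Y$. Inverting this triangular system by induction on $|X|$ yields $i(X,Y)$ directly.
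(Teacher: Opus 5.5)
Your proof is correct, and it takes a genuinely different route from the paper's.

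\textbf{The paper's route.} The paper proves cancellation structurally. It encodes each finite ordered set $X$ as a polynomial $\p_X$ with nonnegative integer coefficients, with one monomial per connected component. The exponents are the multiplicities of the directly indecomposable factors in the Hashimoto factorization of that component. Since components of a product are products of components, $\p_{X\times Y}=\p_X\p_Y$. Cancellation then follows because the polynomial ring over $\mathbb{Z}$ is an integral domain.

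\textbf{Your route.} You use Lov\'asz's homomorphism-counting method. It rests on three facts.
\begin{enumerate}
\item The count $h(X,\cdot)$ is multiplicative.
\item $h(X,R)\geq 1$, because constant maps are order-preserving.
\item Two triangular inversions take you from $h(\cdot,Y)$ to the embedding counts $i(\cdot,Y)$. The first is M\"obius inversion over the partition lattice, with non-antisymmetric quotients collapsed. This is handled correctly: maps constant on the blocks of a partition are exactly the order-preserving maps from the collapsed quotient. The second is downward induction over the order extensions on the same underlying set.
\end{enumerate}

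\textbf{Comparison.} Your argument is self-contained and needs neither Hashimoto's Refinement Theorem nor the component structure of products. It also gives Lemma~\ref{L:iteration-roots} for free. If $P^{\times k}\cong Q^{\times k}$, then $h(X,P)^k=h(X,Q)^k$ for every $X$, and this forces $h(X,P)=h(X,Q)$ for nonnegative integers. The paper's proof is much shorter here, because Hashimoto's theorem is needed anyway for $\Tow$ and for the ordering $\preceq$. Its polynomial encoding is then reused for direct roots.

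\textbf{Two minor points.}
\begin{itemize}
\item In your last step, a bijective order-embedding is already an isomorphism, so you do not need to count comparable pairs.
\item In your fallback route, $\mathrm{sur}(X,Z)$ must count all surjective order-preserving maps onto the image with its induced order, not only ``strong'' surjections. The induction also cannot run on $|X|$ alone. It must run on $|X|$ and then downward on the number of comparable pairs, because bijective order-preserving maps from $X$ onto proper extensions of $X$ also contribute terms with $|Z|=|X|$. Your main route does not depend on this.
\end{itemize}
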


\begin{proof}
Let $\DirInd$ denote the set of isomorphism types of connected,
nontrivial, directly indecomposable ordered sets. For every
$D\in\DirInd$, introduce an indeterminate $x_D$.
Encode a~connected ordered set $K$ by the monomial
\[
   \m_K=\prod\setm{x_D^{\mult_D(K)}}{D\in\DirInd},
\]
where $\mult_D(K)$ is the multiplicity of $D$ in the direct decomposition
of $K$. In particular, $\m_{\one}=1$.
For a~finite ordered set $X$, let $\comp(X)$ denote the set of connected components of $X$.
Define
\[
   \p_X=\sum\setm{\m_K}{K\in\comp(X)}.
\]
Isomorphic components are counted separately in this sum.
The uniqueness of connected components and Hashimoto's Refinement
Theorem imply that $\p_X=\p_{X'}$ if and only if $X\cong X'$.
The components of a~product are the products of components, so
$\p_{X\times Y}=\p_X\p_Y$.
Thus the hypothesis gives $\p_P\p_R=\p_Q\p_R$.
Only finitely many indeterminates occur in these polynomials, and
$\p_R\neq0$.
Cancellation in the polynomial ring over $\mathbb Z$ gives
$\p_P=\p_Q$, and therefore $P\cong Q$.
\end{proof}

\subsection{The operation \texorpdfstring{$\Con$}{Con}}

Let $P$ and $Y$ be ordered sets.
We define $\Con(P,Y)$ as the union of those connected components of
$P^Y$ that contain a~map constant on each connected component of $Y$.
If $Y$ has $r$ connected components, these maps form a~suborder
isomorphic to the direct product of $r$ copies of $P$.
If $P$ is connected, this suborder is connected, so all these maps
lie in one component and $\Con(P,Y)$ is connected.
If $P$ is bounded, then every map in $P^Y$ lies between the constant
maps with values $0_P$ and~$1_P$; hence
\begin{equation}\label{E:bounded}
   \Con(P,Y)=P^Y.
\end{equation}

For the ordered sets $P$, $Q$, $Y$, and $Z$, we use
\begin{align}
   \Con(P\times Q,Y)
      &\cong \Con(P,Y)\times\Con(Q,Y),\label{E:C-product}\\
   \Con(P,Y+Z)
      &\cong \Con(P,Y)\times\Con(P,Z),\label{E:C-sum}\\
   \Con\bigl(\Con(P,Y),Z\bigr)
      &\cong \Con(P,Y\times Z).\label{E:C-iteration}
\end{align}
See \cite[Proposition~1]{FarBC} and
\cite[Theorem~2.8]{FarBBb}.
We include direct proofs in the finite case.

\begin{proof}[Proof of \eqref{E:C-product}--\eqref{E:C-iteration}]
The canonical isomorphism $(P\times Q)^Y\cong P^Y\times Q^Y$
sends a~map constant on each component of $Y$ to a~pair of maps
with the same property, and conversely. Since the components of
a~product are the products of components, it restricts to the
isomorphism in \eqref{E:C-product}.
Similarly, restriction gives $P^{Y+Z}\cong P^Y\times P^Z$ and
identifies the selected components, proving \eqref{E:C-sum}.

For \eqref{E:C-iteration}, use the currying isomorphism
\[
   P^{Y\times Z}\longrightarrow(P^Y)^Z,
   \qquad f\longmapsto\bigl(z\mapsto(y\mapsto f(y,z))\bigr).
\]
Let $f$ lie in $\Con(P,Y\times Z)$, and choose a~comparability
path from $f$ to a~map $g$ constant on every component of
$Y\times Z$. These components are the products of components
of $Y$ and $Z$. Thus every section $y\mapsto g(y,z)$ is constant
on each component of $Y$, and the curried map of $g$ is constant
on each component of $Z$.
For fixed $z$, evaluation of the curried path gives a~path in
$P^Y$ ending at this section of $g$. Every section along the path
therefore belongs to $\Con(P,Y)$. Hence the entire curried path
lies in $\Con(P,Y)^Z$ and ends at a~map constant on each component
of $Z$. The curried map of $f$ belongs to
$\Con(\Con(P,Y),Z)$.

Conversely, let $h$ lie in $\Con(\Con(P,Y),Z)$.
Choose a~path in $\Con(P,Y)^Z$ from $h$ to a~map that is constant
on each component $Z_j$ of $Z$, with value $h_j\in\Con(P,Y)$.
For each $j$, choose a~path in $P^Y$ from $h_j$ to a~map $g_j$
constant on every component of $Y$. This path lies in
$\Con(P,Y)$. Change the value on one $Z_j$ at a~time along its
chosen path, keeping it constant throughout $Z_j$.
Each resulting map on $Z$ is order-preserving, since distinct
components have no comparabilities. There are finitely many
components, so these paths concatenate to a~finite path ending
at the map with value $g_j$ on $Z_j$. After uncurrying, its
endpoint is constant on every component of $Y\times Z$.
Thus the uncurried map of $h$ belongs to $\Con(P,Y\times Z)$.
The currying isomorphism therefore restricts to
\eqref{E:C-iteration}.
\end{proof}

We will use the following height formula twice. Its proof is short;
see also \cite[Lemma~15(4)]{FarBC}.

\begin{lemma}\label{L:height}
For the ordered sets $P$ and $Y$,
\begin{equation}\label{E:height-exponent}
   \height\bigl(\Con(P,Y)\bigr)=\height(P^Y)=|Y|\height(P).
\end{equation}
\end{lemma}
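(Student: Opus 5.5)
We prove the two equalities via the chain of inequalities
\[
   |Y|\height(P)\le\height\bigl(\Con(P,Y)\bigr)\le\height(P^Y)\le|Y|\height(P).
\]
The middle inequality is immediate, because $\Con(P,Y)$ is a~suborder of $P^Y$ and a~chain in a~suborder is a~chain in the whole.

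For the upper bound, take a~chain $f_0<f_1<\cdots<f_m$ in $P^Y$ and fix $y\in Y$. The values $f_0(y)\le f_1(y)\le\cdots\le f_m(y)$ form a~weakly increasing sequence in $P$. Its distinct values form a~chain in $P$, so the number of indices $i$ with $f_{i-1}(y)<f_i(y)$ is at most $\height(P)$. Each strict step $f_{i-1}<f_i$ changes at least one coordinate. Counting the pairs $(i,y)$ with $f_{i-1}(y)\neq f_i(y)$ therefore gives
\[
   m\le\sum_{y\in Y}\height(P)=|Y|\height(P).
\]
This is the same argument used for \eqref{E:height-product}.

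For the lower bound, we build a~chain of length $|Y|\height(P)$ inside $\Con(P,Y)$.
\begin{enumerate}[(i)]
\item Let $c_0<c_1<\cdots<c_h$ be a~chain in $P$ with $h=\height(P)$.
\item Fix a~linear extension $y_1,\dots,y_n$ of $Y$, listed so that $y_j<y_k$ implies $j<k$.
\item Start from the constant map $c_0$. This map lies in $\Con(P,Y)$ by definition.
\item Raise values one element at a~time, processing elements from the top of the linear extension downward, so $y_n$ first. At each stage the map takes the value $c_{t+1}$ on a~final segment $\{y_k,\dots,y_n\}$ and $c_t$ on the remaining elements. Such a~map is order-preserving, because it only needs to be monotone along $Y$ and final segments of a~linear extension are up-sets.
\item Passing from the constant map $c_t$ to the constant map $c_{t+1}$ therefore takes $n$ strict steps. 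Over all $t$ this gives $nh$ strict steps in total.
\end{enumerate}
Consecutive maps in this chain are comparable. So the whole chain lies in one connected component of $P^Y$, namely the component of the constant map $c_0$, and hence lies in $\Con(P,Y)$.

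The only point needing care is this last membership claim in $\Con(P,Y)$. It is handled by noting that a~chain is itself a~comparability path starting at a~constant map.
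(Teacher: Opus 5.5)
Your proof is correct and is essentially the paper's argument: the upper bound counts strict increases coordinatewise, and the lower bound raises values along a maximum chain one element at a time in reverse order of a linear extension, starting from a constant map so the chain stays in $\Con(P,Y)$. Stating the middle inequality $\height(\Con(P,Y))\le\height(P^Y)$ explicitly makes visible a step the paper leaves implicit.
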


\begin{proof}
Along a~chain in $P^Y$, the value at a~fixed element of $Y$ can
increase strictly at most $\height(P)$ times.
Every strict step changes at least one value.
This gives the upper bound $|Y|\height(P)$.

For the reverse inequality, choose a~chain
$p_0<\cdots<p_h$ in $P$, where $h=\height(P)$, and a~linear extension of $Y$.
Starting with the constant map $p_0$, change the values from $p_{j-1}$
to $p_j$ one at a~time, in reverse order of this linear extension,
for $j=1,\dots,h$.
At each stage of a~transition, the elements already changed form
an upper set of $Y$. Since $p_{j-1}<p_j$, every intermediate map
is therefore order-preserving.
This gives a~chain of length $|Y|h$, starting at a~constant map,
and therefore lying in $\Con(P,Y)$.
\end{proof}

\subsection{A~normal form}

A~connected, directly indecomposable ordered set $E$ is
\emph{absolutely $\Con$-indecomposable}, if a~representation
$E\cong\Con(P,Y)$ implies that $|Y|=1$ and $P\cong E$.

McKenzie's structure theory gives the following normal form
\cite[Theorem~5.1]{McKJJ}; see also
\cite[Theorem~2(4) and Lemmas~8 and~9]{FarBC}.
We spell out its uniqueness and how it behaves under $\Con$.

\begin{lemma}[Normal form]\label{L:normal}
Let $D$ be a~connected, nontrivial, directly indecomposable ordered set.
There are ordered sets $E_D$ and $F_D$, unique up to isomorphism, such that
$E_D$ is absolutely $\Con$-indecomposable, $F_D$ is connected,
and
\begin{equation}\label{E:normal}
   D\cong\Con(E_D,F_D).
\end{equation}
For a~connected ordered set $Y$, the ordered set $\Con(D,Y)$
is connected, nontrivial, and directly indecomposable, and
\begin{equation}\label{E:normal-exponent}
   E_{\Con(D,Y)}\cong E_D,
   \qquad
   F_{\Con(D,Y)}\cong F_D\times Y.
\end{equation}
\end{lemma}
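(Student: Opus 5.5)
We will treat existence as the content of McKenzie's theorem \cite[Theorem~5.1]{McKJJ} and concentrate on the parts the lemma spells out: the behaviour under $\Con$, and uniqueness.

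\emph{Existence.} We argue by induction on $|D|$. If $D$ is absolutely $\Con$-indecomposable, take $E_D=D$ and $F_D=\one$; then $\Con(D,\one)=D^{\one}\cong D$. Otherwise $D\cong\Con(P,Y)$ with $|Y|>1$ or $P\not\cong D$. By \eqref{E:C-sum}, a disconnected $Y$ would split $D$ as a product. Since $D$ is directly indecomposable, this lets us reduce to connected $Y$ (the other factors being $\one$). Likewise \eqref{E:C-product} forces $P$ to be directly indecomposable and connected. Lemma~\ref{L:height} gives $\height(D)=|Y|\height(P)$. So when $|Y|>1$, the height of $P$ is strictly smaller, or $P$ is trivial, which is impossible since $D$ is nontrivial. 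Induction then gives $P\cong\Con(E_P,F_P)$, and \eqref{E:C-iteration} yields $D\cong\Con(E_P,F_P\times Y)$. The degenerate case $|Y|=1$ with $P\not\cong D$ cannot occur, since then $\Con(P,\one)\cong P$.

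\emph{Behaviour under $\Con$.} Let $Y$ be connected. Connectedness of $\Con(D,Y)$ was noted right after its definition. Nontriviality holds because the constant maps already give a copy of $D$. Given the normal form, \eqref{E:C-iteration} gives
\[
\Con(D,Y)\cong\Con(E_D,F_D\times Y),
\]
and $F_D\times Y$ is connected. So once we know $\Con(D,Y)$ is directly indecomposable, uniqueness yields \eqref{E:normal-exponent}.

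For direct indecomposability, suppose $\Con(D,Y)\cong P_1\times P_2$ with both factors nontrivial. Each factor is then connected. Here we would invoke McKenzie's result that the decomposition of $\Con(D,Y)$ is governed by that of $D$: roughly, $\Con(-,Y)$ on connected sets has an inverse on its image up to isomorphism, via McKenzie's root-extraction \cite{McKJJ} and \cite[Lemmas~8 and~9]{FarBC}. Writing $P_i\cong\Con(Q_i,Y)$, \eqref{E:C-product} and cancellation-type uniqueness would give $D\cong Q_1\times Q_2$, a contradiction.

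\emph{Uniqueness.} Suppose $\Con(E,F)\cong\Con(E',F')$ with both $E,E'$ absolutely $\Con$-indecomposable. The plan is to show $F\cong F'$ and then cancel the exponent. By McKenzie's unique-root theorem, $\Con(E,F)\cong\Con(E',F')$ for connected $F,F'$ forces a common "root": there is $G$ with $F\cong G\times H$, $F'\cong G\times H'$, and $\Con(E,H)\cong\Con(E',H')$ reduced to a coprime situation. Absolute indecomposability of $E$ and $E'$ then forces $H\cong H'\cong\one$, which gives $E\cong E'$ and, via Lemma~\ref{L:cancellation}, $F\cong F'$.

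\emph{Main obstacle.} The hard step is this uniqueness/root-extraction argument. It cannot be derived from the elementary identities \eqref{E:C-product}--\eqref{E:C-iteration} alone, so we rely on McKenzie's structure theory as cited rather than reproving it.
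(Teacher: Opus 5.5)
\textbf{Existence.} Your induction is sound, and it is a more elementary route than the paper's appeal to \cite[Theorem~2(4)]{FarBC}. It needs three small repairs. A disconnected $Y$ is excluded outright by \eqref{E:C-sum}: each $\Con(P,Y_i)$ contains a copy of the nontrivial $P$, so no factor is $\one$, contrary to your parenthetical. Connectedness of $P$ does not come from \eqref{E:C-product}; it comes from $\Con(P_1+P_2,Y)=\Con(P_1,Y)+\Con(P_2,Y)$ for connected $Y$. Finally, since what you actually show decreases is the height, induct on height rather than on $|D|$.

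\textbf{Uniqueness has a genuine gap.} Absolute $\Con$-indecomposability of $E$ constrains only representations $E\cong\Con(P,Y)$ of $E$ itself. Your ``common root'' statement ($F\cong G\times H$, $F'\cong G\times H'$, $\Con(E,H)\cong\Con(E',H')$ with $H,H'$ ``coprime'') never produces such a representation. So nothing supports the step ``forces $H\cong H'\cong\one$''. The missing idea is the analogue of $a^m=b^n$, $\gcd(m,n)=1\Rightarrow a=c^n$, $b=c^m$: the bases themselves must be exponentials of a common base. This is exactly the form of \cite[Theorem~4]{FarBC} that the paper uses. It gives connected $H,U,V,Z$ with $E\cong\Con(H,U)$, $E'\cong\Con(H,V)$, $F\cong V\times Z$, and $F'\cong U\times Z$. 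Absolute indecomposability then applies to $E$ and $E'$ directly. It yields $U\cong V\cong\one$ and $E\cong E'\cong H$, hence $F\cong F'\cong Z$, with no cancellation step needed.

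\textbf{Direct indecomposability of $\Con(D,Y)$ has a similar hole.} An injectivity statement for $\Con(-,Y)$ on connected types (your ``inverse on its image'') would handle only the last step, $\Con(D,Y)\cong\Con(Q_1\times Q_2,Y)\Rightarrow D\cong Q_1\times Q_2$. Lemma~\ref{L:cancellation} cancels product factors, not exponents, so it cannot supply even that. More seriously, injectivity says nothing about why the factors $P_i$ lie in the image, i.e.\ why $P_i\cong\Con(Q_i,Y)$. Closure of the image under direct factors is, via Hashimoto's theorem, essentially the statement you are trying to prove. The paper does not derive it. It cites \cite[Lemma~9(1)]{FarBC} for connectedness and direct indecomposability, and then, as you do, combines \eqref{E:C-iteration} with uniqueness to obtain \eqref{E:normal-exponent}.
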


\begin{proof}
Existence follows from \cite[Theorem~2(4)]{FarBC}, which restates
McKenzie's result. Lemma~8 of the same paper shows that $F_D$ is connected.

To verify uniqueness, suppose that also
$D\cong\Con(E',F')$, where $E'$ is absolutely $\Con$-indecomposable
and $F'$ is connected.
By \cite[Theorem~4]{FarBC}, there are connected ordered sets
$H$, $U$, $V$, and $Z$ such that
\begin{align*}
 E_D&\cong\Con(H,U),\\
 E'&\cong\Con(H,V),\\
 F_D&\cong V\times Z,\\
 F'&\cong U\times Z.
\end{align*}

Absolute $\Con$-indecomposability gives $U\cong V\cong\one$ and
$E_D\cong E'\cong H$. Hence $F_D\cong F'\cong Z$.

For connected $Y$, the connectedness and direct
indecomposability of $\Con(D,Y)$ follow from
\cite[Lemma~9(1)]{FarBC}. Constant maps embed $D$ into $\Con(D,Y)$,
so this ordered set is nontrivial.
Finally, \eqref{E:C-iteration} gives
\[
   \Con(D,Y)
      \cong\Con\bigl(\Con(E_D,F_D),Y\bigr)
      \cong\Con(E_D,F_D\times Y).
\]
The product $F_D\times Y$ is connected.
The uniqueness just proved therefore gives
\eqref{E:normal-exponent}.
\end{proof}

\section{The construction \texorpdfstring{$\Tow$}{Tow}}\label{S:Lambda}

\begin{definition}\label{D:Lambda}
Let $P$ be a~connected ordered set, with the direct
decomposition \eqref{E:direct}.
Define
\begin{equation}\label{E:Lambda}
   \Tow(P)=F_{D_1}+\cdots+F_{D_k}.
\end{equation}
For $P=\one$, this is the empty sum: $\Tow(\one)=\emptyset$.
\end{definition}

Hashimoto's Refinement Theorem, together with Lemma~\ref{L:normal},
shows that $\Tow(P)$ is well-defined up to isomorphism.

The next lemma records two algebraic identities for $\Tow$ and the
height inequality needed later.

\begin{lemma}\label{L:Lambda}
Let $P$ and $Q$ be connected ordered sets, and let
$Y$ be an ordered set.
Then
\begin{align}
   \Tow(P\times Q)
      &\cong\Tow(P)+\Tow(Q),\label{E:Lambda-product}\\
   \Tow\bigl(\Con(P,Y)\bigr)
      &\cong Y\times\Tow(P).\label{E:Lambda-exponent}
\end{align}
If $P$ is nontrivial, then $\Tow(P)$ is an ordered set and
\begin{equation}\label{E:Lambda-height}
   \height\bigl(\Tow(P)\bigr)<\height(P).
\end{equation}
\end{lemma}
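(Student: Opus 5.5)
The plan is to handle the three assertions separately, reducing each to the behaviour of the individual directly indecomposable factors. Throughout, I use Hashimoto's Refinement Theorem in the form \eqref{E:direct}, and Lemma~\ref{L:normal} for the factors.

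For \eqref{E:Lambda-product}, take direct decompositions $P\cong D_1\times\cdots\times D_k$ and $Q\cong D'_1\times\cdots\times D'_l$ into connected, nontrivial, directly indecomposable factors. Concatenating them gives such a decomposition of $P\times Q$, which is connected. By the uniqueness in Hashimoto's theorem, this is \emph{the} decomposition of $P\times Q$ up to isomorphism and order. So $\Tow(P\times Q)$ is the sum of all $F_{D_i}$ and $F_{D'_j}$, which is $\Tow(P)+\Tow(Q)$. The degenerate cases $P=\one$ or $Q=\one$ are covered by the convention on empty sums.

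For \eqref{E:Lambda-exponent}, first dispose of $P=\one$. Here $\Con(\one,Y)=\one^Y$ is a single point, so both sides are the empty sum, using the convention that $Y\times\emptyset=\emptyset$. Now let $k\ge1$, and let $Y_1,\dots,Y_r$ be the components of $Y$.
\begin{enumerate}[(i)]
\item Repeated use of \eqref{E:C-sum} gives $\Con(P,Y)\cong\prod_j\Con(P,Y_j)$.
\item Repeated use of \eqref{E:C-product} then gives $\Con(P,Y)\cong\prod_{i,j}\Con(D_i,Y_j)$.
\item Each $Y_j$ is connected, so Lemma~\ref{L:normal} says each $\Con(D_i,Y_j)$ is connected, nontrivial, and directly indecomposable, with $F_{\Con(D_i,Y_j)}\cong F_{D_i}\times Y_j$.
\end{enumerate}
Hence the displayed product is the direct decomposition of $\Con(P,Y)$, and
\[
\Tow\bigl(\Con(P,Y)\bigr)\cong\sum_{i,j}F_{D_i}\times Y_j\cong Y\times\Tow(P),
\]
by distributivity of $\times$ over $+$ and $Y=Y_1+\cdots+Y_r$.

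For \eqref{E:Lambda-height}, nontriviality of $P$ gives $k\ge1$, so $\Tow(P)$ is a nonempty sum, that is, an ordered set. By \eqref{E:height-product}, $\height(\Tow(P))=\max_i\height(F_{D_i})$ and $\height(P)=\sum_i\height(D_i)\ge\height(D_i)$ for each $i$. So it suffices to show $\height(F_{D_i})<\height(D_i)$.

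Lemma~\ref{L:height} together with \eqref{E:normal} gives $\height(D_i)=|F_{D_i}|\cdot\height(E_{D_i})$. If $E_{D_i}$ were trivial, then $D_i\cong\Con(\one,F_{D_i})$ would be trivial, which is impossible. Since $E_{D_i}$ is also connected, it has a comparable pair, so $\height(E_{D_i})\ge1$. Therefore
\[
\height(D_i)\ge|F_{D_i}|>|F_{D_i}|-1\ge\height(F_{D_i}),
\]
which finishes the argument.

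I expect no real obstacle. The only delicate point is the bookkeeping in \eqref{E:Lambda-exponent}: one must make sure the factors $\Con(D_i,Y_j)$ really form the Hashimoto decomposition, which needs the component-wise split of $Y$ so that Lemma~\ref{L:normal} applies to connected exponents. One must also handle the conventions for empty sums and products.
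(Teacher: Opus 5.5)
Your proposal is correct and follows the paper's proof essentially step for step. You concatenate the Hashimoto decompositions for \eqref{E:Lambda-product}; you split $Y$ into components and apply \eqref{E:C-sum}, \eqref{E:C-product}, and Lemma~\ref{L:normal} for \eqref{E:Lambda-exponent}; and you use the chain $\height(F_{D_i})<|F_{D_i}|\le|F_{D_i}|\height(E_{D_i})=\height(D_i)\le\height(P)$ for \eqref{E:Lambda-height}. Your separate argument that $E_{D_i}$ is nontrivial is a harmless extra, since the paper gets this directly from the definition: absolutely $\Con$-indecomposable sets are directly indecomposable, hence nontrivial.
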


\begin{proof}
The factors of $P\times Q$ are the factors of $P$ together with those
of $Q$, proving \eqref{E:Lambda-product}.

Write $P\cong\prod\setm{D_j}{1\leq j\leq k}$ as in \eqref{E:direct}, and let
$Y=Y_1+\cdots+Y_r$ be the decomposition of $Y$ into connected components.
By \eqref{E:C-product} and \eqref{E:C-sum},
\[
   \Con(P,Y)\cong
      \prod\setm{\Con(D_j,Y_i)}
                 {1\leq i\leq r,\ 1\leq j\leq k}.
\]
Each factor is connected and directly indecomposable by
Lemma~\ref{L:normal}.
The same lemma gives
\[
   \Tow\bigl(\Con(P,Y)\bigr)
      \cong\sum\setm{F_{D_j}\times Y_i}
                        {1\leq i\leq r,\ 1\leq j\leq k}
      \cong Y\times\Tow(P),
\]
proving \eqref{E:Lambda-exponent}.
The empty product and empty sum cover the case $P=\one$.

Now assume that $P$ is nontrivial.
Then $k\geq1$.
For each factor $D_j$, the ordered set $E_{D_j}$ is connected and
nontrivial, and therefore $\height(E_{D_j})\geq1$.
By Lemma~\ref{L:height},
\[
   \height(F_{D_j})<|F_{D_j}|
      \leq |F_{D_j}|\height(E_{D_j})
      =\height(D_j)\leq\height(P).
\]
The height of $\Tow(P)$ is the maximum of the heights of its
summands, proving \eqref{E:Lambda-height}.
\end{proof}

\section{Ordering the connected ordered sets}\label{S:ordering}

We next build the auxiliary ordering used to choose $\Max(X)$.
We work with isomorphism types throughout this construction.
Thus a~linear ordering of the types gives a~relation $\preceq$ on
ordered sets for which
\[
   P\preceq Q\text{ and }Q\preceq P
      \quad\text{if and only if}\quad P\cong Q.
\]
We write $P\prec Q$ when $P\preceq Q$ and $P\ncong Q$.

\begin{theorem}\label{T:ordering}
There is a~linear ordering $\preceq$ of the isomorphism types of
connected ordered sets with the following properties.
For connected ordered sets $P$, $Q$, and $R$, and
an ordered set $Y$,
\begin{align*}
   P\prec Q &\quad\text{implies that}\quad
      P\times R\prec Q\times R,\\
   P\prec Q &\quad\text{implies that}\quad
      \Con(P,Y)\prec\Con(Q,Y).
\end{align*}
Both implications also hold with $\preceq$ in place of $\prec$.
\end{theorem}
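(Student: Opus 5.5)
The plan is to build $\preceq$ by well-founded recursion on height. We use the free commutative monoid structure that Hashimoto's Refinement Theorem gives to connected ordered sets, together with the normal form of Lemma~\ref{L:normal} for the indecomposable factors. For a connected ordered set $P$ with decomposition \eqref{E:direct}, let $M(P)$ be the finite multiset of the types $D_1,\dots,D_k$ in $\DirInd$. Then $P\cong Q$ if and only if $M(P)=M(Q)$, and $M(P\times R)=M(P)\uplus M(R)$. Suppose we have a linear order $<$ on $\DirInd$. We compare multisets by the standard multiset extension: list each multiset in decreasing order and compare lexicographically, with a proper initial segment counting as smaller. This extension is linear, and it is compatible with union: $M<N$ implies $M\uplus R<N\uplus R$. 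Defining $P\prec Q$ by $M(P)<M(Q)$, the first implication of Theorem~\ref{T:ordering} (for products) then holds formally for \emph{any} linear order on $\DirInd$. The $\preceq$ versions are immediate, since equality means isomorphism.

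Next we define the order on $\DirInd$. Fix an arbitrary linear order of the types of absolutely $\Con$-indecomposable ordered sets. Encode each $D\in\DirInd$ by the pair $(E_D,F_D)$, which is unique by Lemma~\ref{L:normal}, with $F_D$ connected. Set $D<D'$ if $E_D<E_{D'}$, or if $E_D\cong E_{D'}$ and $F_D\prec F_{D'}$. This is circular on its face, since $\prec$ on connected sets is defined through $<$ on $\DirInd$. We resolve this by recursion on height. By \eqref{E:Lambda-height}, or directly from the proof of Lemma~\ref{L:Lambda}, $\height(F_D)<\height(D)$. Comparing connected sets of height at most $n$ only involves factors $D$ of height at most $n$, and comparing two such factors only involves $\prec$ on the sets $F_D$, which have height less than $n$. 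So we define $\prec_n$ on connected sets of height at most $n$ inductively, check that $\prec_{n+1}$ extends $\prec_n$, and take the union. One checks by the same induction that each stage is a linear order. The main technical care is needed here, in verifying that the recursion is well-founded and yields a consistent linear order on all types; the auxiliary chains may be infinite, which is harmless.

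For the $\Con$ implication, first take $Y$ connected and $D<D'$ in $\DirInd$. By \eqref{E:normal-exponent}, $\Con(D,Y)$ corresponds to $(E_D,F_D\times Y)$. If $E_D<E_{D'}$, the inequality persists. If $E_D\cong E_{D'}$ and $F_D\prec F_{D'}$, then $F_D\times Y\prec F_{D'}\times Y$ by the product implication, which is already proved globally and formally. Hence $D\mapsto\Con(D,Y)$ is strictly increasing on $\DirInd$. For connected $P$, \eqref{E:C-product} gives $M(\Con(P,Y))=\{\Con(D_j,Y)\}$, the image of $M(P)$ under this map. A strictly increasing map of a linear order induces a strictly increasing map on the multiset extension, so $P\prec Q$ implies $\Con(P,Y)\prec\Con(Q,Y)$.

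For general $Y=Y_1+\cdots+Y_r$, we use \eqref{E:C-sum} to write $\Con(P,Y)\cong\prod_i\Con(P,Y_i)$. We then replace the factors $\Con(P,Y_i)$ by $\Con(Q,Y_i)$ one at a time. Each replacement is a strict increase, by the connected case together with the product implication, and transitivity finishes the argument.
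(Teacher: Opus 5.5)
Your proposal is correct, and its outer structure matches the paper's. Your sorted-sequence comparison of multisets is the same as the paper's rule ``compare multiplicities at the largest factor where they differ''; that rule is also the cleanest way to see compatibility with $\uplus$. In both arguments the order on $\DirInd$ is lexicographic on the normal-form pairs $(E_D,F_D)$, monotonicity of $D\mapsto\Con(D,Y)$ comes from $F_D\mapsto F_D\times Y$, and disconnected $Y$ is handled by the same telescoping product.

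The real difference is how the circularity is broken. You compare the $F_D$ by the final order $\prec$ itself. You then have to justify a recursion on height, using $\height(F_D)<\height(D)$ and checking that the stages are consistent; your sketch of this is sound. The paper observes that the recursion is unnecessary. The only property of the comparator on the second coordinate that is ever used is that it linearly orders connected types and is preserved by multiplication with a connected $Y$. Such an order is available for free: the preliminary order $\leq_1$, which compares exponent vectors at the last unequal coordinate with respect to a fixed enumeration of $\DirInd$. The paper uses $\leq_1$ on both $E_D$ and $F_D$ to define $\leq_2$, and then builds $\preceq$ from $\leq_2$ in one step.

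So the paper's construction is flat and needs neither the height bound nor a well-foundedness or consistency argument. Yours is self-referential. It needs only a choice of order on the absolutely $\Con$-indecomposable types, rather than an enumeration of all of $\DirInd$, but it pays with the inductive bookkeeping you flagged as the main technical point.
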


The construction has three steps. The first and third use the same
elementary observation about lexicographic comparison.

\begin{lemma}\label{L:lex}
Let $C$ and $N$ be chains, possibly infinite, and let $\F$ be a~set of functions
from $C$ to $N$ such that any two functions differ at only finitely
many elements of $C$.
For distinct $f,g\in\F$, let $\did(f,g)$ be the largest element
at which they differ.
Declare $f<g$ if $f(\did(f,g))<g(\did(f,g))$.
This defines a~linear ordering of $\F$.
\end{lemma}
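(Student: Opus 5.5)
Since $C$ and $N$ are chains, we only need to check that the relation is a strict linear order: irreflexive, total on distinct pairs, asymmetric, and transitive.

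First, $\did(f,g)$ is well defined. For distinct $f,g\in\F$, the set $\Delta(f,g)=\setm{c\in C}{f(c)\neq g(c)}$ is nonempty because $f\neq g$, and finite by hypothesis. A nonempty finite subset of a chain has a largest element, so $\did(f,g)=\max\Delta(f,g)$ exists and $\did(f,g)=\did(g,f)$. At $d=\did(f,g)$ we have $f(d)\neq g(d)$ in the chain $N$. Hence exactly one of $f(d)<g(d)$ and $g(d)<f(d)$ holds, which gives totality and asymmetry. Irreflexivity is built into the definition, because we only compare distinct functions.

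The one step that needs an argument is transitivity, and I expect it to be the main obstacle, although it is mild. Suppose $f<g$ and $g<h$. Put $a=\did(f,g)$ and $b=\did(g,h)$. Then $f$ and $g$ agree above $a$, and $g$ and $h$ agree above $b$. Let $m=\max\{a,b\}$. Above $m$ all three functions agree, so $f$ and $h$ agree there.

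There are three cases.
\begin{itemize}
\item If $a<b$, then $m=b$, and $f(b)=g(b)<h(b)$.
\item If $b<a$, then $m=a$, and $f(a)<g(a)=h(a)$.
\item If $a=b$, then $f(a)<g(a)<h(a)$.
\end{itemize}
In every case $f(m)<h(m)$, and in particular $f(m)\neq h(m)$. Since $f$ and $h$ agree above $m$, this gives $m=\did(f,h)$, and therefore $f<h$.

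Together, these steps show that the relation is a strict linear ordering of $\F$.
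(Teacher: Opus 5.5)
Your proof is correct and takes essentially the same approach as the paper: comparability and asymmetry follow from $N$ being a chain, and transitivity uses $c=\max\{\did(f,g),\did(g,h)\}$, where all three functions agree above $c$ and $f(c)<h(c)$. Your explicit check that $\did$ is well defined and your three-case split just spell out what the paper compresses into ``$f(c)\leq g(c)\leq h(c)$ with at least one strict inequality.''
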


\begin{proof}
Comparability and asymmetry follow from the fact that $N$ is a~chain.
To verify transitivity, let $f<g<h$ and put
\[
   c=\max\{\did(f,g),\did(g,h)\}.
\]
The three functions agree above $c$.
At $c$, we have $f(c)\leq g(c)\leq h(c)$, with at least one strict
inequality.
Hence $\did(f,h)=c$ and $f<h$.
\end{proof}

\subsection{A~preliminary ordering}

There are countably many finite ordered sets up to isomorphism, so we
can enumerate
\[
   \DirInd=\{D_1,D_2,\dots\}.
\]
For a~connected ordered set $P$, write
\[
   P\cong\prod\setm{D_i^{k_i}}{i\geq1},
   \qquad
   \vec P=(k_1,k_2,\dots).
\]
Here $D_i^{k_i}$ denotes a~direct product of $k_i$ copies of $D_i$;
only finitely many $k_i$ are nonzero.
Hashimoto's Refinement Theorem makes $\vec P$ unique.

Order these vectors by their last unequal coordinate, using the usual
ordering of the indices.
Write $P\leq_1 Q$ for the resulting comparison.
Lemma~\ref{L:lex}, with $C=\N$ and $N=\Nzero$, shows that this is
a~linear ordering of isomorphism types.
Since
\begin{equation}\label{E:vectors}
   \overrightarrow{P\times R}=\vec P+\vec R,
\end{equation}
adding the same vector does not change the last unequal coordinate
or the comparison there.
Thus multiplication by a~connected ordered set preserves
both $\leq_1$ and its strict form $<_1$.

\subsection{Ordering the directly indecomposable factors}

For $D\in\DirInd$, use the normal form $D\cong\Con(E_D,F_D)$.
Order the pairs $(E_D,F_D)$ lexicographically by $\leq_1$:
first compare $E_D$, and, if these ordered sets are isomorphic,
compare $F_D$.
Write $\leq_2$ for this ordering of $\DirInd$.
It is a~linear ordering, since the pair $(E_D,F_D)$ determines $D$
up to isomorphism.

Let $Y$ be connected.
By Lemma~\ref{L:normal}, replacing $D$ by $\Con(D,Y)$ replaces its
normal-form pair by
\[
   (E_D,F_D\times Y).
\]
The first coordinate is unchanged, and multiplication by $Y$ preserves
strict comparisons in the second coordinate.
Consequently,
\begin{equation}\label{E:prime-order}
   D<_2 D'
      \quad\text{implies that}\quad
   \Con(D,Y)<_2\Con(D',Y).
\end{equation}
The map $D\mapsto\Con(D,Y)$ is also injective on isomorphism types.
Indeed, if $\Con(D,Y)\cong\Con(D',Y)$, uniqueness of the normal form gives
\[
   E_D\cong E_{D'},\qquad F_D\times Y\cong F_{D'}\times Y.
\]
By Lemma~\ref{L:cancellation}, $F_D\cong F_{D'}$, so $D\cong D'$.
Thus distinct directly indecomposable factors remain distinct, and
\eqref{E:prime-order} preserves their ordering.

\subsection{The required ordering}

For nonisomorphic, connected ordered sets $P$ and $Q$, consider the
finite set of factors in $\DirInd$ whose multiplicities in $P$ and $Q$
are different.
Let $D$ be its largest member in the ordering $\leq_2$.
Define $P\prec Q$ if the multiplicity of $D$ in $P$ is smaller than
its multiplicity in $Q$.
Together with equality of isomorphism types, this defines $\preceq$.

\begin{proof}[Proof of Theorem~\ref{T:ordering}]
Lemma~\ref{L:lex}, now applied to multiplicity functions on
$(\DirInd,\leq_2)$, shows that $\preceq$ is a~linear ordering of
isomorphism types.
Equation~\eqref{E:vectors} shows that multiplication preserves
strict comparisons.

Suppose that $Y$ is connected.
By \eqref{E:C-product}, the factors of $\Con(P,Y)$ are obtained by
replacing each factor $D$ of $P$ by $\Con(D,Y)$, with the same
multiplicity.
These are directly indecomposable factors by Lemma~\ref{L:normal}.
By \eqref{E:prime-order}, distinct factors remain distinct and their
ordering is preserved.
Therefore, the largest factor at which the multiplicities differ
is replaced by the largest such factor after exponentiation.
Its multiplicities are unchanged.
This proves that $P\prec Q$ implies $\Con(P,Y)\prec\Con(Q,Y)$.

For an arbitrary $Y$, write $Y=Y_1+\cdots+Y_r$ as a~sum
of connected components.
If $P\prec Q$, the connected case gives
$\Con(P,Y_i)\prec\Con(Q,Y_i)$ for every $i$.
For $0\leq j\leq r$, put
\[
   \Z_j=\prod\setm{\Con(Q,Y_i)}{1\leq i\leq j}
         \times\prod\setm{\Con(P,Y_i)}{j+1\leq i\leq r},
\]
with an empty product interpreted as $\one$.
The factors other than the $j$th one have a~connected
product. Compatibility with multiplication therefore gives
$\Z_{j-1}\prec \Z_j$ for $1\leq j\leq r$.
By transitivity, $\Z_0\prec \Z_r$, and \eqref{E:C-sum} gives
$\Con(P,Y)\prec\Con(Q,Y)$.
The non-strict implications follow because both constructions
preserve isomorphisms.
\end{proof}

The ordering depends on the enumeration of $\DirInd$ chosen above.
Fix that enumeration for the remainder of the paper; the argument
does not require the resulting order to be independent of this choice.

\section{Selecting an interval of maximum height}\label{S:intervals}

For each ordered set $X$, let
\[
   \MaxInt(X)=\{[a,b]\subseteq X\mid \height([a,b])=\height(X)\}.
\]
The endpoints of a~chain of maximum length in $X$ give a~member of
$\MaxInt(X)$. Every member of $\MaxInt(X)$ is connected.

\begin{definition}\label{D:T}
Let $\Max(X)$ be a~$\preceq$-greatest member of $\MaxInt(X)$.
\end{definition}

Such a~member exists because $\MaxInt(X)$ is finite.
It is unique up to isomorphism, since $\preceq$ linearly orders
isomorphism types.
In particular,
\begin{equation}\label{E:T-invariance}
\begin{gathered}
   X\cong X'\quad\text{implies that}\quad \Max(X)\cong \Max(X'),\\
   \height(\Max(X))=\height(X).
\end{gathered}
\end{equation}

We begin by describing the intervals of maximum height in an
exponential ordered set; compare \cite[Lemma~15(3)]{FarBC}.

\begin{lemma}\label{L:exponent-intervals}
Let $X$ and $Y$ be ordered sets, and let
$Y=Y_1+\cdots+Y_r$ be the decomposition of $Y$ into connected components.
The members of $\MaxInt(X^Y)$ are, up to isomorphism, precisely the
ordered sets
\begin{equation}\label{E:exponent-intervals}
   I_1^{Y_1}\times\cdots\times I_r^{Y_r},
   \qquad I_i\in\MaxInt(X)\quad(1\leq i\leq r).
\end{equation}
\end{lemma}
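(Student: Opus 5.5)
First reduce to connected $Y$. Restriction gives $X^Y\cong X^{Y_1}\times\cdots\times X^{Y_r}$, and intervals in a~product are products of intervals: $[(a_i),(b_i)]=\prod_i[a_i,b_i]$. By \eqref{E:height-product}, the height of such a~product is the sum of the heights of the factors. By Lemma~\ref{L:height}, $\height(X^Y)=\sum_i\height(X^{Y_i})$. Each factor interval satisfies $\height([a_i,b_i])\leq\height(X^{Y_i})$. Hence a~product interval has maximum height exactly when every factor does. So $\MaxInt(X^Y)$ consists of the products of members of the sets $\MaxInt(X^{Y_i})$, and it suffices to show that, for connected $Y$, the members of $\MaxInt(X^Y)$ are up to isomorphism exactly the ordered sets $I^Y$ with $I\in\MaxInt(X)$.

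Fix a~connected $Y$ and let $h=\height(X)$. For maps $f\leq g$ in $X^Y$, the interval $[f,g]$ consists of the order-preserving $u$ with $f(y)\leq u(y)\leq g(y)$ for all $y$. Arguing as in the upper bound of Lemma~\ref{L:height}, a~chain in $[f,g]$ can increase the value at $y$ at most $\height([f(y),g(y)])\leq h$ times. Thus
\[
   \height([f,g])\leq\sum_{y\in Y}\height([f(y),g(y)]),
\]
and the interval has height $|Y|h$ only if $\height([f(y),g(y)])=h$ for every $y$.

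The key step, and the main obstacle, is to show that maximum height forces $f$ and $g$ to be constant. Let $y<y'$ in $Y$. Then $f(y)\leq f(y')\leq g(y')$, so the interval $[f(y),g(y')]$ contains the chain of length $h$ in $[f(y'),g(y')]$ together with the point $f(y)$. If $f(y)<f(y')$, this gives a~chain of length $h+1$ in $X$, which is impossible. Hence $f(y)=f(y')$, and symmetrically $g(y)=g(y')$. Since $Y$ is connected, any two elements are joined by a~comparability path, so $f$ and $g$ are constant, with values $a\leq b$ and $\height([a,b])=h$. Then $[f,g]=\{u\in X^Y\mid a\leq u(y)\leq b\}\cong[a,b]^Y$. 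Conversely, for $I=[a,b]\in\MaxInt(X)$, the constant maps $a$ and $b$ bound an interval isomorphic to $I^Y$, and by Lemma~\ref{L:height} its height is $|Y|h=\height(X^Y)$. Combined with the reduction in the first paragraph, this gives \eqref{E:exponent-intervals}.
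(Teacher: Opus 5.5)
Your proof is correct and takes essentially the paper's route. The coordinatewise height bound forces every $[f(y),g(y)]$ to have height $\height(X)$, a chain-extension argument makes $f$ and $g$ constant on components, and the converse follows from Lemma~\ref{L:height}. The differences are cosmetic: you first reduce to connected $Y$ via $X^Y\cong\prod X^{Y_i}$ (the paper treats all components at once), and you argue constancy by directly extending a chain for $y<y'$ rather than noting that $f(y)$ and $g(y)$ must be minimal and maximal in $X$.
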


\begin{proof}
Let $[f,g]\in\MaxInt(X^Y)$ and put $h=\height(X)$.
Applying the coordinatewise upper bound from the proof of
Lemma~\ref{L:height}, we obtain
\[
   |Y|h=\height([f,g])
      \leq\sum\setm{\height([f(y),g(y)])}{y\in Y}
      \leq |Y|h.
\]
Thus every interval $[f(y),g(y)]$ has height $h$.
A~chain of maximum length in this bounded interval contains both
endpoints. Hence its bottom is a~minimal element of $X$ and its top
is a~maximal element of $X$; otherwise that chain could be extended
in $X$.
For comparable $y,z\in Y$, the elements $f(y)$ and $f(z)$ are
comparable minimal elements, so they are equal.
Similarly, $g(y)=g(z)$.
Hence $f$ and $g$ are constant on each $Y_i$, with values $a_i$
and $b_i$, respectively.
Setting $I_i=[a_i,b_i]$, restriction to the components $Y_i$ gives
an order-isomorphism
\[
   [f,g]\cong\prod\setm{I_i^{Y_i}}{1\leq i\leq r}.
\]
This proves \eqref{E:exponent-intervals}.

Conversely, choose $I_i=[a_i,b_i]\in\MaxInt(X)$ for every $i$.
The maps with constant values $a_i$ and $b_i$ on $Y_i$ are the
endpoints of an interval of the form \eqref{E:exponent-intervals}.
By Lemma~\ref{L:height} and \eqref{E:height-product}, its height is
$\sum\setm{|Y_i|h}{1\leq i\leq r}=|Y|h$.
\end{proof}

We can now establish the two properties of $\Max$ that we need for
the reconstruction.

\begin{lemma}\label{L:T}
For ordered sets $X$ and $Y$,
\begin{align}
   \Max(X\times Y)&\cong \Max(X)\times \Max(Y),\label{E:T-product}\\
   \Max(X^Y)&\cong \Max(X)^Y.\label{E:T-exponent}
\end{align}
\end{lemma}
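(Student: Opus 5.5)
For \eqref{E:T-product}, I will first describe $\MaxInt(X\times Y)$. An interval of $X\times Y$ has the form $[(a,c),(b,d)]=[a,b]\times[c,d]$. By \eqref{E:height-product}, its height is $\height([a,b])+\height([c,d])$. This equals $\height(X)+\height(Y)$ exactly when both factors have maximum height. So the members of $\MaxInt(X\times Y)$ are precisely the products $I\times J$ with $I\in\MaxInt(X)$ and $J\in\MaxInt(Y)$. Put $M=\Max(X)$ and $N=\Max(Y)$. Given $I\times J$ as above, we have $I\preceq M$ and $J\preceq N$, and all four ordered sets are connected. Theorem~\ref{T:ordering}, applied twice with the non-strict implication, gives
\[
   I\times J\preceq M\times J\preceq M\times N.
\]
The first step multiplies by $J$ and the second by $M$, using commutativity of $\times$ up to isomorphism. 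Since $M\times N$ itself belongs to $\MaxInt(X\times Y)$, it is a $\preceq$-greatest member. Uniqueness up to isomorphism then gives \eqref{E:T-product}.

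For \eqref{E:T-exponent}, I will use Lemma~\ref{L:exponent-intervals}. Write $Y=Y_1+\cdots+Y_r$ for the decomposition into components. The members of $\MaxInt(X^Y)$ are, up to isomorphism, the products $\prod_i I_i^{Y_i}$ with $I_i\in\MaxInt(X)$. Each $I_i$ is bounded, so \eqref{E:bounded} gives $I_i^{Y_i}=\Con(I_i,Y_i)$. Let $M=\Max(X)$. Since $I_i\preceq M$, Theorem~\ref{T:ordering} gives $\Con(I_i,Y_i)\preceq\Con(M,Y_i)$, and both sides are connected. I will then replace the factors one at a time, exactly as in the chain $\Z_0,\dots,\Z_r$ in the proof of Theorem~\ref{T:ordering}, using compatibility of $\preceq$ with multiplication by connected sets. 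This gives
\[
   \prod_i I_i^{Y_i}\preceq\prod_i M^{Y_i}\cong M^Y,
\]
where the last isomorphism is restriction to the components, as in \eqref{E:C-sum}. Since $M^Y$ is itself the member of $\MaxInt(X^Y)$ obtained by taking every $I_i=M$, it is $\preceq$-greatest. Hence $\Max(X^Y)\cong M^Y$.

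The main point to check is that every comparison takes place among connected ordered sets, because $\preceq$ is defined only there. This holds because intervals are connected, and $\Con(P,Y_i)$ is connected for connected $P$; products of connected sets are connected as well. A second point is that Theorem~\ref{T:ordering} is stated for $\Con$ and not for plain exponentiation. This is why I rewrite $I^{Y_i}$ as $\Con(I,Y_i)$ using boundedness of intervals. Once these identifications are made, both parts reduce to the monotonicity in Theorem~\ref{T:ordering}, together with the fact that the candidate built from $\Max$ actually occurs in $\MaxInt$.
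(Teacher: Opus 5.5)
Your proposal is correct and follows the paper's proof essentially step for step. For \eqref{E:T-product} you use maximum-height intervals as products together with monotonicity of $\preceq$ under multiplication; for \eqref{E:T-exponent} you use Lemma~\ref{L:exponent-intervals}, the boundedness identity \eqref{E:bounded}, and monotonicity under $\Con$, with your chain $I\times J\preceq M\times J\preceq M\times N$ and factor-by-factor replacement simply spelling out the ``taking products'' step that the paper leaves implicit.
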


\begin{proof}
Every interval in $X\times Y$ is a~product $I\times J$ of intervals
in $X$ and $Y$.
By \eqref{E:height-product}, it has maximum height if and only if
$I\in\MaxInt(X)$ and $J\in\MaxInt(Y)$.
For such intervals, Theorem~\ref{T:ordering} gives
\[
   I\times J\preceq \Max(X)\times \Max(Y).
\]
The right-hand side is itself an interval of maximum height.
This proves \eqref{E:T-product}.

For \eqref{E:T-exponent}, write $Y=Y_1+\cdots+Y_r$ as above.
By Lemma~\ref{L:exponent-intervals}, every maximum-height interval
in $X^Y$ has the form \eqref{E:exponent-intervals}, up to isomorphism.
Since $I_i$ and $\Max(X)$ are bounded, \eqref{E:bounded} and
Theorem~\ref{T:ordering} give
\[
   I_i^{Y_i}=\Con(I_i,Y_i)
      \preceq\Con(\Max(X),Y_i)=\Max(X)^{Y_i}.
\]
Taking products, we see that every such interval is at most
\[
   \prod\setm{\Max(X)^{Y_i}}{1\leq i\leq r}\cong\Max(X)^Y
\]
in the ordering $\preceq$.
The latter is also an interval of maximum height, obtained by
taking $I_i=\Max(X)$ for every $i$.
This proves \eqref{E:T-exponent}.
\end{proof}

\section{Proof of the main theorem}\label{S:reconstruction}

Lemma~\ref{L:T} shows that $\Max$ respects products and exponentiation.
Lemma~\ref{L:Lambda} gives the complementary behavior of $\Tow$:
it turns products of connected ordered sets into sums, turns
exponentiation of bounded ordered sets into products, and lowers
height on every nontrivial connected ordered set. Together, these
facts allow us to reconstruct $A$ from $A^A$.

The next proposition handles the case in which $A$ has positive
height. Its construction starts from $E\cong A^A$ alone; it does not
require prior knowledge of $A$.

\begin{proposition}\label{P:reconstruction}
Let $A$ be an ordered set of positive height, and let
$E\cong A^A$.
The constructions
\begin{align*}
   U&=\Tow(\Max(E)),\\
   V&=\Max(U),\\
   W&=\Max(\Tow(V))
\end{align*}
are well-defined.
There is a~connected ordered set $L$, unique up to
isomorphism, such that
\begin{equation}\label{E:recover-L}
   V\cong W\times L.
\end{equation}
Moreover, $L\cong \Max(A)$ and
\begin{equation}\label{E:recover-A}
   U\cong\Tow(L)\times A.
\end{equation}
Thus $A$ is the unique finite ordered set, up to isomorphism,
that satisfies \eqref{E:recover-A}.
\end{proposition}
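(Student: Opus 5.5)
The plan is to compute every object in the statement explicitly in terms of $A$. Put $M=\Max(A)$. Since $\height(A)>0$, $M$ is a nontrivial bounded interval, so it is connected. By \eqref{E:T-invariance} and \eqref{E:T-exponent}, $\Max(E)\cong\Max(A^A)\cong M^A$. Because $M$ is bounded, \eqref{E:bounded} gives $M^A=\Con(M,A)$, which is connected, and its height is $|A|\height(A)>0$ by Lemma~\ref{L:height}. So $\Tow$ applies, and \eqref{E:Lambda-exponent} yields
\[
   U\cong A\times\Tow(M).
\]
By Lemma~\ref{L:Lambda}, $\Tow(M)$ is a genuine (nonempty) ordered set, so $U$ is an ordered set and $V=\Max(U)$ is defined. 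Using \eqref{E:T-product}, we get
\[
   V\cong M\times N,\qquad N=\Max(\Tow(M)).
\]
The set $N$ is an interval, so it is connected. Moreover $\height(V)\ge\height(M)>0$, so $V$ is connected and nontrivial, and $\Tow(V)$ is a nonempty ordered set. This gives well-definedness of $U$, $V$, and $W$.

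Next we compute $W$. By \eqref{E:Lambda-product},
\[
   \Tow(V)\cong\Tow(M)+\Tow(N),
\]
where the second summand is the empty sum if $N\cong\one$. This is the step I expect to need the most care: we must show that $\Max$ of this sum is $\Max$ of the first summand. Every interval of a sum lies in one summand. If $N$ is nontrivial, \eqref{E:Lambda-height} gives
\[
   \height(\Tow(N))<\height(N)=\height(\Tow(M)).
\]
Hence the sum has height $\height(\Tow(M))$, and the intervals of maximum height are exactly the members of $\MaxInt(\Tow(M))$. Therefore $W\cong\Max(\Tow(M))=N$. If $N\cong\one$, the second summand is empty and the same conclusion is immediate. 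Consequently $V\cong W\times M$, so \eqref{E:recover-L} holds with $L=M=\Max(A)$, which is connected.

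For uniqueness of $L$, suppose $V\cong W\times L'$. Then $W\times L\cong W\times L'$, and Lemma~\ref{L:cancellation} gives $L'\cong L$. Thus $L$ is determined by $E$ alone, and so is $\Tow(L)\cong\Tow(M)$. Now $U\cong A\times\Tow(M)=\Tow(L)\times A$ is exactly \eqref{E:recover-A}. Finally, if some finite ordered set $A'$ satisfies $U\cong\Tow(L)\times A'$, then $\Tow(L)\times A\cong\Tow(L)\times A'$. Since $\Tow(L)$ is a nonempty ordered set ($L$ is nontrivial), Lemma~\ref{L:cancellation} gives $A'\cong A$. This shows that $A$ is the unique solution.
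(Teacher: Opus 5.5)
Your proposal is correct and follows the paper's proof step for step. You compute $\Max(E)\cong\Con(\Max(A),A)$, obtain $U\cong\Tow(\Max(A))\times A$ and $V\cong\Max(\Tow(\Max(A)))\times\Max(A)$, use \eqref{E:Lambda-height} to confine the maximum-height intervals of $\Tow(V)$ to the summand $\Tow(\Max(A))$, and finish with two applications of Lemma~\ref{L:cancellation}; your $M$ and $N$ are the paper's $L_0$ and $S$.
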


\begin{proof}
Let $L_0=\Max(A)$ and $R=\Tow(L_0)$.
Since $\height(A)\geq1$, the interval $L_0$ is nontrivial.
Lemma~\ref{L:Lambda} shows that $R$ is an ordered set, so we can set
$S=\Max(R)$.

By Lemma~\ref{L:T} and the boundedness of $L_0$,
\[
   \Max(E)\cong L_0^A=\Con(L_0,A).
\]
Applying $\Tow$ and using \eqref{E:Lambda-exponent}, we obtain
\begin{equation}\label{E:U}
   U\cong R\times A.
\end{equation}
Thus $V$ is defined.
Applying $\Max$ to \eqref{E:U} gives
\begin{equation}\label{E:V}
   V\cong S\times L_0.
\end{equation}
Consequently, by \eqref{E:Lambda-product},
\begin{equation}\label{E:Lambda-V}
   \Tow(V)\cong\Tow(S)+R.
\end{equation}
The sum in \eqref{E:Lambda-V} contains $R$, so $W$ is defined.

If $S=\one$ up to isomorphism, then $\Tow(S)=\emptyset$.
Otherwise, the height inequality \eqref{E:Lambda-height} gives
\[
   \height(\Tow(S))<\height(S)=\height(R).
\]
Every interval in a~disjoint sum lies in one summand.
If $\Tow(S)=\emptyset$, the right-hand side of
\eqref{E:Lambda-V} is just $R$. Otherwise, its maximum-height
intervals cannot lie in $\Tow(S)$, by the strict height inequality.
In either case, they are precisely the maximum-height intervals of $R$.
It follows that
\[
   W=\Max(\Tow(V))\cong \Max(R)=S.
\]
Together with \eqref{E:V}, this proves
$V\cong W\times L_0$.

Lemma~\ref{L:cancellation} shows that any
$L$ satisfying \eqref{E:recover-L} is isomorphic to $L_0$.
In particular, $L$ is connected, so $\Tow(L)$ is defined.
Therefore, $\Tow(L)\cong R$, and \eqref{E:U}
is exactly \eqref{E:recover-A}.
Finally, cancellation of $\Tow(L)$ in \eqref{E:recover-A}
proves the uniqueness of $A$.
\end{proof}

Thus $E$ determines $U$, $V$, and $W$. The factorization
$V\cong W\times L$ then determines $L$, and
$U\cong\Tow(L)\times A$ determines $A$.
These conclusions are up to isomorphism, for the fixed auxiliary
ordering from Section~\ref{S:ordering}.

\begin{proof}[Proof of Theorem~\ref{T:main}]
Suppose that $A$ is an antichain, with $|A|=n\geq1$.
Then $A^A$ is an antichain with $n^n$ elements.
Since $B$ embeds into $B^B$ by constant maps, $B$ is also an antichain.
Writing $|B|=m$, we have $m^m=n^n$.
The function $n\mapsto n^n$ is strictly increasing on the positive
integers, so $m=n$ and $A\cong B$.

Now assume that $\height(A)\geq1$.
Let $E=A^A\cong B^B$.
By Lemma~\ref{L:height}, $B$ also has positive height.
Apply Proposition~\ref{P:reconstruction} to the same ordered set $E$,
first with $A$ and then with $B$.
The ordered sets $U$, $V$, $W$, and $L$ are determined by $E$ up to
isomorphism, so
\[
   \Tow(L)\times A\cong U\cong\Tow(L)\times B.
\]
Cancellation gives $A\cong B$.
\end{proof}

\section{Iterated exponentiation}\label{S:iteration}

Theorem~\ref{T:main} raises a broader question: which expressions
formed from copies of $A$ using exponentiation determine $A$?
Does the answer depend on the parenthesization?
We prove reconstruction for a recursive family that includes all
left-associated and right-associated powers, as well as every
parenthesization with at most four copies of $A$.

An \emph{exponential term} in the variable $x$ is defined recursively:
$x$ is an exponential term, and $s^t$ is an exponential term, whenever
$s$ and $t$ are exponential terms. Its value at an ordered set $A$ is
defined by
\[
   x(A)=A,\qquad (s^t)(A)=s(A)^{t(A)}.
\]

\begin{definition}
We call an exponential term $t$ \emph{reconstructing}, if, for finite ordered sets $A$ and $B$,
\[
   t(A)\cong t(B)\quad\text{implies that}\quad A\cong B.
\]
\end{definition}

In the definition, the same term—and hence the same parenthesization—
is used for $A$ and $B$. Different parenthesizations need not give
isomorphic ordered sets.

\subsection{Exponential laws and direct roots}

For ordered sets $X$, $Y$, and $Z$, we have
\begin{align}
   (X^Y)^Z&\cong X^{Y\times Z},\label{E:iteration-law}\\
   (X\times Y)^Z&\cong X^Z\times Y^Z.\label{E:iteration-base-product}
\end{align}
The first isomorphism sends $f\colon Z\to X^Y$ to the map
$(y,z)\mapsto f(z)(y)$; the second sends each map to its two
coordinate maps. Both preserve and reflect the pointwise order.
In particular,
\begin{equation}\label{E:iteration-interchange}
   (X^Y)^Z\cong(X^Z)^Y.
\end{equation}

Write $X^{\times k}$ for the direct product of $k$ copies of $X$, with
$X^{\times0}=\one$. We also write $kX$ for the disjoint sum of $k$
copies of $X$, with $0X=\emptyset$. Repeated factors and summands are
counted with their multiplicities.

\begin{lemma}[Direct roots]\label{L:iteration-roots}
Let $X$ and $Y$ be finite ordered sets, and let $k\geq1$.
If $X^{\times k}\cong Y^{\times k}$, then $X\cong Y$.
\end{lemma}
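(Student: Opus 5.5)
We would prove this with the polynomial encoding from the proof of Lemma~\ref{L:cancellation}. For a~finite ordered set $X$, write $\p_X$ for the polynomial in the indeterminates $x_D$, $D\in\DirInd$, defined there. Two facts established in that proof do the work: $\p_X=\p_{X'}$ if and only if $X\cong X'$, and $\p_{X\times Y}=\p_X\p_Y$. The hypothesis $X^{\times k}\cong Y^{\times k}$ then gives
\[
   \p_X^{\,k}=\p_Y^{\,k}
\]
in the polynomial ring $\mathbb Z[x_D\mid D\in\DirInd]$. Only finitely many indeterminates occur, so we may work in a~polynomial ring in finitely many variables over $\mathbb Z$.

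Next we extract a~$k$th root. The ring $\mathbb Z[x_1,\dots,x_n]$ is an integral domain, and $\p_X^k-\p_Y^k=\prod_\zeta(\p_X-\zeta\p_Y)$ over the $k$th roots of unity $\zeta$ in $\mathbb C[x_1,\dots,x_n]$. Hence $\p_X=\zeta\p_Y$ for some root of unity $\zeta$. Both $\p_X$ and $\p_Y$ are nonzero, since ordered sets are nonempty. Their coefficients are nonnegative integers: each is a~sum of monomials, one for each component. Comparing a~nonzero coefficient forces $\zeta$ to be a~positive real number, so $\zeta=1$. An alternative avoids complex numbers. Order the monomials lexicographically; the leading coefficients $c_X,c_Y>0$ satisfy $c_X^k=c_Y^k$, so $c_X=c_Y$. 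Now induct: if $\p_X\neq\p_Y$, then $\p_X^k-\p_Y^k=(\p_X-\p_Y)\sum_{i}\p_X^{i}\p_Y^{k-1-i}$. The second factor has nonnegative coefficients and positive leading coefficient $kc_X^{k-1}$, so it is nonzero, and the product is nonzero in a~domain. This contradicts $\p_X^k=\p_Y^k$.

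Finally, $\p_X=\p_Y$ gives $X\cong Y$ by the injectivity of the encoding, as recorded in the proof of Lemma~\ref{L:cancellation}. The only delicate point is the root-extraction step, that is, ruling out $\p_X=\zeta\p_Y$ with $\zeta\neq1$. The positivity of the coefficients handles it, and the factorization argument in the integral domain gives a~clean proof without leaving $\mathbb Z$.
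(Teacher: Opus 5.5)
Your proof is correct, and your second, complex-free argument is essentially the paper's. The paper also writes $(\p_X-\p_Y)\sum_{j}\p_X^{k-1-j}\p_Y^{j}=0$ and notes the sum is nonzero because $\p_X,\p_Y$ are nonzero with nonnegative coefficients; it then uses that $\mathbb Z[x_D]$ is a domain and that $X\mapsto\p_X$ is injective. Your leading-coefficient bookkeeping and the roots-of-unity variant are valid but not needed, since nonnegativity alone already rules out cancellation in the sum.
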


\begin{proof}
Use the polynomials from the proof of Lemma~\ref{L:cancellation}.
The hypothesis gives $\p_X^k=\p_Y^k$, and hence
\[
   (\p_X-\p_Y)\sum\setm{\p_X^{k-1-j}\p_Y^j}{0\leq j\leq k-1}=0.
\]
The sum is nonzero, since $\p_X$ and $\p_Y$ are nonzero polynomials
with nonnegative coefficients. The polynomial ring is an integral
domain, so $\p_X=\p_Y$, and therefore $X\cong Y$.
\end{proof}

\subsection{Two auxiliary constructions}

For an ordered set $X$, define its \emph{derived sum} by
\begin{equation}\label{E:iteration-der}
   \Der(X)=\Tow(\Max(X)).
\end{equation}
If $X$ is an antichain, this is the empty sum. For a~connected,
nontrivial ordered set $P$, define its \emph{height reduction} by
\begin{equation}\label{E:iteration-red}
   \Red(P)=\Max(\Tow(P)).
\end{equation}
By Lemma~\ref{L:Lambda}, the latter is defined and has height less
than $\height(P)$. Both constructions are invariant under isomorphism,
with the auxiliary ordering fixed as before.

\begin{lemma}\label{L:iteration-der}
For ordered sets $X$ and $Y$,
\begin{align}
   \Der(X\times Y)&\cong\Der(X)+\Der(Y),
      \label{E:iteration-der-product}\\
   \Der(X^Y)&\cong\Der(X)\times Y.
      \label{E:iteration-der-exponent}
\end{align}
\end{lemma}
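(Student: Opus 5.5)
Both identities follow by unwinding $\Der=\Tow\circ\Max$ and combining Lemma~\ref{L:T} with Lemma~\ref{L:Lambda}. Every member of $\MaxInt$ is connected, so $\Max(X)$ and $\Max(Y)$ are connected and $\Tow$ may be applied to them and to their products.

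For \eqref{E:iteration-der-product}, I first apply \eqref{E:T-product} to get $\Max(X\times Y)\cong\Max(X)\times\Max(Y)$. Both factors are connected, so \eqref{E:Lambda-product} gives
\[
   \Der(X\times Y)\cong\Tow\bigl(\Max(X)\bigr)+\Tow\bigl(\Max(Y)\bigr)=\Der(X)+\Der(Y).
\]
The degenerate cases need a short check. If $X$ is an antichain, then $\Max(X)\cong\one$ and $\Tow(\one)=\emptyset$. The empty-sum convention makes the formula read correctly in that case, and \eqref{E:Lambda-product} already covers trivial factors through the empty product.

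For \eqref{E:iteration-der-exponent}, I apply \eqref{E:T-exponent} to get $\Max(X^Y)\cong\Max(X)^Y$. Since $\Max(X)$ is an interval, it is bounded, so \eqref{E:bounded} gives $\Max(X)^Y=\Con(\Max(X),Y)$. Then \eqref{E:Lambda-exponent}, applied to the connected ordered set $P=\Max(X)$, yields
\[
   \Der(X^Y)\cong Y\times\Tow\bigl(\Max(X)\bigr)\cong\Der(X)\times Y,
\]
using commutativity of the direct product up to isomorphism. The same computation appears in the proof of Proposition~\ref{P:reconstruction}, where $U\cong R\times A$.

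The only real obstacle is bookkeeping in the antichain case for \eqref{E:iteration-der-exponent}. If $X$ is an antichain, $\Der(X)=\emptyset$, and the right-hand side $\emptyset\times Y$ is the empty sum by the stated convention. On the left, $X^Y$ is an antichain, because the values at each point of $Y$ range over an antichain, so $\Der(X^Y)=\emptyset$ as well. This agrees with \eqref{E:Lambda-exponent} applied to $P=\one$, which the proof of Lemma~\ref{L:Lambda} explicitly covers. So no separate argument is really needed beyond noting that $\Max(X)\cong\one$ is connected and bounded.
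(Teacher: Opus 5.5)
Your proposal is correct and follows the paper's proof: \eqref{E:T-product} combined with \eqref{E:Lambda-product} gives the first identity, and \eqref{E:T-exponent}, \eqref{E:bounded}, and \eqref{E:Lambda-exponent} applied to the bounded, connected $\Max(X)$ give the second. Your separate check of the antichain case agrees with the paper's empty-sum conventions, but it is not needed, since $\Max(X)\cong\one$ is already connected and bounded.
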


\begin{proof}
The first identity follows from \eqref{E:T-product} and
\eqref{E:Lambda-product}. For the second, \eqref{E:T-exponent} gives
$\Max(X^Y)\cong\Max(X)^Y$. Since $\Max(X)$ is bounded,
\eqref{E:bounded} and \eqref{E:Lambda-exponent} yield
\[
   \Tow(\Max(X)^Y)
      =\Tow(\Con(\Max(X),Y))
      \cong\Tow(\Max(X))\times Y.
\]
\end{proof}

\subsection{A~recursive family}

\begin{theorem}\label{T:iteration-reconstruction}
Fix integers $d\geq1$, $r_i\geq0$, and $s_i\geq1$, for
$1\leq i\leq d$. For a~finite ordered set $A$, define
\begin{align}
   E_0(A)&=A,\notag\\
   D_i(A)&=A^{\times r_i}\times E_{i-1}(A)^{\times s_i},\notag\\
   E_i(A)&=A^{D_i(A)}\qquad(1\leq i\leq d).
      \label{E:iteration-family}
\end{align}
For finite ordered sets $A$ and $B$,
\[
   E_d(A)\cong E_d(B)\quad\text{implies that}\quad A\cong B.
\]
\end{theorem}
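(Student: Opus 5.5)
We treat antichains separately and otherwise run a two-stage version of the argument for Proposition~\ref{P:reconstruction}. Stage one recovers $L=\Max(A)$ from $E_d(A)$ alone. Stage two peels off the levels $D_d(A),D_{d-1}(A),\dots,D_1(A)$ one at a time, using cancellation (Lemma~\ref{L:cancellation}) and direct roots (Lemma~\ref{L:iteration-roots}).

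\emph{Antichain case.} Suppose $A$ is an antichain with $n$ elements. By induction on $i$, each $E_i(A)$ is an antichain whose size is a strictly increasing function of $n$; this uses $|X^Y|=|X|^{|Y|}$ for antichains $X$. Constant maps embed $A$ into $A^{D_i(A)}=E_i(A)$. Hence if $E_d(B)\cong E_d(A)$, then $B$ embeds into an antichain and is itself an antichain. The sizes then force $|B|=n$, so $B\cong A$.

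\emph{Stage one: recovering $L$.} Assume $\height(A)\ge1$. Put $L=\Max(A)$, $R=\Tow(L)$ and $S=\Max(R)$, exactly as in the proof of Proposition~\ref{P:reconstruction}. For $i\ge1$, \eqref{E:T-exponent} gives $\Max(E_i(A))\cong L^{D_i(A)}=\Con(L,D_i(A))$. Then \eqref{E:Lambda-exponent} gives $\Tow(\Max(E_i(A)))\cong R\times D_i(A)$; for $i=0$ it is just $R$. Now apply $\Max$ and \eqref{E:T-product}. Define $X_i=\Max(R\times D_i(A))\cong S\times\Max(D_i(A))$, where
\[
\Max(D_i(A))\cong L^{\times r_i}\times \Max(E_{i-1}(A))^{\times s_i}.
\]
We first get $X_d=\Max(\Der(E_d(A)))$. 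Next we show that $\Max(\Tow(X_i))\cong X_{i-1}$ for $2\le i\le d$. By \eqref{E:Lambda-product},
\[
\Tow(X_i)\cong\Tow(S)+r_iR+s_i\,(R\times D_{i-1}(A)).
\]
For $i\ge2$, $D_{i-1}(A)$ has positive height because it has the factor $E_{i-2}(A)^{\times s_{i-1}}$ with $s_{i-1}\ge1$. So the summands $R\times D_{i-1}(A)$ strictly exceed $R$ in height. By \eqref{E:Lambda-height}, $R$ in turn strictly exceeds $\Tow(S)$ in height, unless $\Tow(S)$ is empty. Every interval of a sum lies in one summand, so the maximum-height intervals lie in copies of $R\times D_{i-1}(A)$, and $\Max(\Tow(X_i))\cong X_{i-1}$.

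At the bottom, $X_1\cong S\times L^{\times(r_1+s_1)}$, and
\[
\Tow(X_1)\cong\Tow(S)+(r_1+s_1)R.
\]
The same height argument as in Proposition~\ref{P:reconstruction} gives $\Max(\Tow(X_1))\cong S$. Thus $E_d(A)$ determines $X_d,\dots,X_1$ and $S$ up to isomorphism. Cancelling $S$ and taking an $(r_1+s_1)$-th direct root then determines $L$, and hence $R$. If $E_d(A)\cong E_d(B)$, the same construction applied to $B$ gives $L_A\cong L_B$ and $R_A\cong R_B$. Also $B$ has positive height, by Lemma~\ref{L:height}.

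\emph{Stage two: descending.} From $\Der(E_d(A))\cong R\times D_d(A)$ and the same for $B$, cancelling $R$ gives $D_d(A)\cong D_d(B)$. Suppose now that $D_i(A)\cong D_i(B)$ for some $i\ge2$. Apply $\Max$, cancel $L^{\times r_i}$, and take $s_i$-th roots to get $\Max(E_{i-1}(A))\cong\Max(E_{i-1}(B))$. Then apply $\Tow$, which gives $R\times D_{i-1}(A)\cong R\times D_{i-1}(B)$, and cancel $R$ to get $D_{i-1}(A)\cong D_{i-1}(B)$. Continuing down to $i=1$, we have $D_1(A)=A^{\times(r_1+s_1)}$, and Lemma~\ref{L:iteration-roots} yields $A\cong B$.

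The main obstacle is the height bookkeeping in stage one. We must check that $\Max\circ\Tow$ really strips exactly one level from $X_i$. This requires that the summands $R\times D_{i-1}(A)$ strictly dominate in height, and that at the last level $R$ dominates $\Tow(S)$. We also need $\Max$ of a sum of several copies of one ordered set to be that ordered set, up to isomorphism.
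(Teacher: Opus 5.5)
Your proof is correct. Stage one is essentially the paper's argument: your $X_i$ is the paper's $V_i=\Red(\Max(E_i))$, and the height comparison among $\Tow(S)$, $r_iR$ and $s_i(R\times D_{i-1}(A))$ is the same.

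The descent is where you differ. The paper applies $\Der$ to $D_i$, getting $\Der(D_i)\cong R\times(r_i\one+s_iD_{i-1})$, and cancels $R$. It then reads off $D_{i-1}$ from the sum $r_i\one+s_iD_{i-1}$ by counting components of each isomorphism type: it removes the $r_i$ extra singletons and divides by $s_i$. You apply $\Max$ instead, using $\Max(D_i)\cong L^{\times r_i}\times\Max(E_{i-1})^{\times s_i}$. You cancel the known factor $L^{\times r_i}$, take an $s_i$-th direct root, then pass through $\Tow$ and cancel $R$. The paper's step needs only $R$ and no roots, at the cost of the component bookkeeping. Yours also needs $L$, but it runs entirely on the cancellation and root lemmas already used in stage one.

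Your route in fact shows that the descent can be skipped. For $d\geq2$, stage one already yields $X_2\cong S\times L^{\times r_2}\times\Max(E_1(A))^{\times s_2}$. Cancellation and a root give $\Max(E_1(A))$, and $\Tow(\Max(E_1(A)))\cong R\times A^{\times(r_1+s_1)}$ then gives $A$. For $d=1$, use $\Max(E_1(A))$ directly.

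There are two small slips, neither affecting the argument. First, $|X^Y|=|X|^{|Y|}$ needs $Y$ to be an antichain too; here it is. Second, in your closing remark, $\Max$ of a sum of copies of $P$ is $\Max(P)$, not $P$.
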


Although \eqref{E:iteration-family} is written using direct products,
each $E_i(A)$ is the value of an exponential term. Starting with $A$,
exponentiate successively by $r_i$ copies of $A$ and then by $s_i$
copies of $E_{i-1}(A)$, associating to the left. The isomorphism
\eqref{E:iteration-law} identifies the resulting expression with
$E_i(A)$. The theorem therefore gives a family of reconstructing
exponential terms.

\begin{proof}
We show how to recover $A$ from $E_d(A)$, with $d$, $r_i$, and $s_i$
fixed. All determinations are up to isomorphism.

Suppose that $A$ is an antichain of size $n\geq1$. Then all
$D_i(A)$ and $E_i(A)$ are antichains. Put $e_i(n)=|E_i(A)|$. We have
\[
   e_0(n)=n,\qquad
   e_i(n)=n^{\,n^{r_i}e_{i-1}(n)^{s_i}}.
\]
Induction shows that every $e_i$ is strictly increasing on the positive
integers. Indeed, its exponent is positive and nondecreasing, and its
base is strictly increasing. Moreover, constant maps embed $A$ into
each $E_i(A)$ for $i\geq1$. Thus $E_d(A)$ is an antichain if and only
if $A$ is an antichain, and in this case $e_d(n)$ determines $n$ and
therefore $A$.

Now assume that $\height(A)>0$, and abbreviate $E_i(A)$ and $D_i(A)$
to $E_i$ and $D_i$. Put
\[
   L=\Max(A),\qquad R=\Tow(L),\qquad S=\Max(R).
\]
The ordered set $L$ is nontrivial, so Lemma~\ref{L:Lambda}
ensures that $R$ and $S$ are defined. Also,
\begin{equation}\label{E:iteration-height-drop}
   \height(S)=\height(R)<\height(L).
\end{equation}
Every $E_i$ has positive height. Since $s_i\geq1$, every $D_i$ also
has positive height.

\smallskip
\noindent\emph{Recovering $L$.}
Put $Q_i=\Max(E_i)$ for $0\leq i\leq d$, so $Q_0=L$.
For $1\leq i\leq d$, \eqref{E:iteration-der-exponent} gives
\begin{equation}\label{E:iteration-Q}
   \Tow(Q_i)=\Der(E_i)\cong R\times D_i.
\end{equation}
Consequently, \eqref{E:T-product} yields
\begin{equation}\label{E:iteration-V}
   V_i =\Red(Q_i)
      \cong S\times L^{\times r_i}\times Q_{i-1}^{\times s_i}.
\end{equation}
In particular, each $V_i$ is connected and nontrivial.
For $i\geq2$, applying \eqref{E:Lambda-product} and then
\eqref{E:iteration-Q} gives
\begin{equation}\label{E:short}
   \Tow(V_i)
      \cong\Tow(S)+r_iR+s_i(R\times D_{i-1}).
\end{equation}
The last summands have height
$\height(R)+\height(D_{i-1})>\height(R)$.
If $S\cong\one$, then $\Tow(S)=\emptyset$; otherwise,
\[
   \height(\Tow(S))<\height(S)=\height(R).
\]
Thus every interval of maximum height lies in one of the $s_i$ copies of
$R\times D_{i-1}$. It follows that
\begin{equation}\label{E:iteration-descent}
   \Red(V_i)\cong\Max(R\times D_{i-1})\cong V_{i-1}
   \qquad(2\leq i\leq d).
\end{equation}

Set $k=r_1+s_1$, so $k\geq1$. Equation~\eqref{E:iteration-V} gives
\begin{align*}
   V_1&\cong S\times L^{\times k},\\
   \Tow(V_1)&\cong\Tow(S)+kR.
\end{align*}
The same height comparison shows that $\Red(V_1)\cong S$.
Together with \eqref{E:iteration-descent}, this proves
\begin{equation}\label{E:iteration-recover-L}
\begin{aligned}
   \Red^d(Q_d)&\cong S\times L^{\times k},\\
   \Red^{d+1}(Q_d)&\cong S.
\end{aligned}
\end{equation}
Here the superscripts on $\Red$ denote composition. Every application
is defined: its argument is $Q_d$ or one of the nontrivial ordered
sets $V_i$. No application to $S$ is required.

Both ordered sets on the left of \eqref{E:iteration-recover-L} are determined
by $E_d$. Cancel the factor $S$, using
Lemma~\ref{L:cancellation}, to recover $L^{\times k}$. Then
Lemma~\ref{L:iteration-roots} recovers $L$. We now also know
$R=\Tow(L)$.

\smallskip
\noindent\emph{Recovering $A$.}
By \eqref{E:iteration-Q},
\[
   \Der(E_d)\cong R\times D_d.
\]
Cancel the known factor $R$ to recover $D_d$.
For $2\leq i\leq d$, Lemma~\ref{L:iteration-der} gives
\begin{align}
   \Der(D_i)
      &\cong r_iR+s_i\Der(E_{i-1})\notag\\
      &\cong r_iR+s_i(R\times D_{i-1})\notag\\
      &\cong R\times(r_i\one+s_iD_{i-1}).
         \label{E:iteration-recover-D}
\end{align}
Once $D_i$ is known, cancel $R$ in
\eqref{E:iteration-recover-D} to determine
$K_i\cong r_i\one+s_iD_{i-1}$.
For a~connected isomorphism type $T$, let $c_T(X)$ denote the
number of components of $X$ of type $T$. Then
\[
   c_T(K_i)=
   \begin{cases}
      r_i+s_i c_{\one}(D_{i-1}),&T\cong\one,\\
      s_i c_T(D_{i-1}),&T\ncong\one.
   \end{cases}
\]
Consequently,
\[
   c_T(D_{i-1})=
   \begin{cases}
      \bigl(c_{\one}(K_i)-r_i\bigr)/s_i,&T\cong\one,\\
      c_T(K_i)/s_i,&T\ncong\one.
   \end{cases}
\]
These are nonnegative integers by \eqref{E:iteration-recover-D}.
Thus all component multiplicities of $D_{i-1}$ are determined,
including its singleton multiplicity. There is no need to identify
which individual singleton components of $K_i$ were contributed
by $r_i\one$. The uniqueness of connected components now
determines $D_{i-1}$ up to isomorphism.

Continuing in this way, we recover
\[
   D_1\cong A^{\times(r_1+s_1)}=A^{\times k}.
\]
Lemma~\ref{L:iteration-roots} now recovers $A$.
For $d=1$, the last step follows directly after recovering $D_d$;
no descent through \eqref{E:iteration-recover-D} is needed.
\end{proof}

\subsection{Parenthesizations covered by the results}

\begin{corollary}\label{C:iteration-towers}
For a~finite ordered set $A$, define
\begin{align*}
   F_1(A)&=A,& F_{m+1}(A)&=F_m(A)^A,\\
   G_1(A)&=A,& G_{m+1}(A)&=A^{G_m(A)},\\
   H_0(A)&=A,& H_{m+1}(A)&=H_m(A)^{H_m(A)}.
\end{align*}
For every fixed $m\geq1$, each of $F_m(A)$, $G_m(A)$, and $H_m(A)$
determines $A$ up to isomorphism.
\end{corollary}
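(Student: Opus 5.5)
The plan is to show that each of the three families is, up to isomorphism, an instance of Theorem~\ref{T:iteration-reconstruction} for a suitable choice of $d$, $r_i$, $s_i$; the case $m=1$ for $F_1$ and $G_1$ is trivial, since these equal $A$.

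\emph{Left towers.} Using \eqref{E:iteration-law} repeatedly gives $F_{m}(A)\cong A^{A^{\times(m-1)}}$ for $m\geq2$. This is $E_1(A)=A^{D_1(A)}$ with $d=1$, $D_1(A)=A^{\times r_1}\times E_0(A)^{\times s_1}$, $r_1=m-2$, $s_1=1$. The condition $r_1\geq0$ needs $m\geq2$, which is exactly the nontrivial range.

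\emph{Right towers.} Here $G_2(A)=A^A=E_1(A)$ with $d=1$, $r_1=0$, $s_1=1$. Inductively, $G_{m+1}(A)=A^{G_m(A)}$. So I take $d=m-1$ and $r_i=0$, $s_i=1$ for all $i$. Then $D_i=E_{i-1}$ and $E_i=A^{E_{i-1}}$, and induction on $i$ gives $E_i(A)=G_{i+1}(A)$. Hence $G_m(A)=E_{m-1}(A)$, and the theorem applies.

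\emph{Self-powers.} For $H$, I want $H_j(A)\cong E_j(A)$ for a suitable choice of parameters. Clearly $H_1=A^A=E_1$ with $r_1=0$, $s_1=1$. In general, $H_{j+1}=H_j^{H_j}$, and writing $H_j\cong A^{D_j}$ for $j\geq1$, \eqref{E:iteration-law} gives
\[
   H_{j+1}\cong (A^{D_j})^{H_j}\cong A^{D_j\times H_j}.
\]
So I need $D_{j+1}\cong D_j\times E_j$, which is not directly of the form $A^{\times r}\times E_j^{\times s}$. This is the main obstacle.

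To handle it, I would unwind $D_j$. Since $D_1=A$, we get $D_{j+1}\cong A\times H_1\times\cdots\times H_j$, and this product involves the earlier $E_i$, not only $E_j$. Consequently, Theorem~\ref{T:iteration-reconstruction} fails to cover $H_m$ in general, and I would not try to force it into that form. Instead I would rerun the proof of that theorem with the generalized recursion $D_i=A^{\times r_i}\times\prod_{l<i}E_l^{\times s_{i,l}}$, where $s_{i,i-1}\geq1$. Recovering $L$ should go through unchanged. In $\Tow(V_i)$, the maximum-height summands are the copies of $R\times D_{i-1}$, because $D_{i-1}$ has the largest height among the $D_l$ with $l<i$, while terms coming from earlier $E_l$ with smaller height drop out. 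For recovering $A$, $\Der(D_i)$ becomes a sum of copies of $R\times D_l$ over $l<i$ together with copies of $R$. After cancelling $R$ by Lemma~\ref{L:cancellation}, I would peel off $s_{i,i-1}D_{i-1}$ by comparing component multiplicities. This requires first knowing the contributions of the earlier $D_l$, so I would organize the induction to recover the components in order of decreasing height.

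Alternatively, if the intended statement only needs small $m$ for $H$, I would check directly that $H_2=(A^A)^{A^A}\cong A^{A\times A^A}$ is $E_2$ with $r_1=0$, $s_1=1$, $r_2=1$, $s_2=1$. I would verify this base case first, and then attempt the generalized recursion for all $m$.
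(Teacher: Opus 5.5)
Your arguments for $F_m$ and $G_m$ are fine. For $G_m$ you use the paper's parameters $d=m-1$, $r_i=0$, $s_i=1$. For $F_m$ your choice $r_1=m-2$, $s_1=1$ works as well as the paper's $r_1=0$, $s_1=m-1$.

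\textbf{The gap is in $H_m$ for $m\geq3$.} There you offer only a plan to generalize Theorem~\ref{T:iteration-reconstruction}, and its decisive step is not justified. In the top-down descent you know $D_i$ and can extract $K_i\cong c\,\one+\sum_{1\leq l<i}s_{i,l}D_l$. At that stage, however, none of the $D_l$ with $l<i-1$ is known. Components of $D_{i-1}$ can be isomorphic to components of those earlier $D_l$.

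Here is an example. Let $C_n$ be the $n$-element chain and take $A=C_2+\one$. Then $A^A\cong(C_3+\one)\times(C_2+\one)$, so $C_2$ is a component of $A^A$. Hence $C_2$ is a component of $D_2=A\times A^A$ as well as of $D_1=A$. In $K_3\cong\one+D_1+D_2$ you therefore cannot tell how many $C_2$-components belong to $D_2$.

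Sorting components by decreasing height does not fix this. It separates only the top-height components of $D_{i-1}$, and the overlap occurs at lower heights. Your check that $H_2\cong E_2$ (with $r_2=s_2=1$) is correct, but it covers only $m\leq2$.

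\textbf{The missing idea} is that no new machinery is needed. Put $X=H_{m-1}(A)$, a finite ordered set; then $H_m(A)=X^X$. So $H_m(A)\cong H_m(B)$ gives $H_{m-1}(A)\cong H_{m-1}(B)$ directly by Theorem~\ref{T:main}. After $m$ such applications you reach $H_0(A)=A\cong B=H_0(B)$. This is the paper's proof. Your observation that $H_m$ does not fit the family~\eqref{E:iteration-family} is correct but beside the point, since Theorem~\ref{T:main} handles each self-power step on its own.
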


\begin{proof}
For $m=1$, both $F_1(A)$ and $G_1(A)$ equal $A$. Now let $m\geq2$
and put
$k=m-1$. By the exponential laws,
\begin{align*}
   F_m(A)&\cong A^{A^{\times k}},\\
   F_m(A)^{\times k}&
      \cong\bigl(A^{\times k}\bigr)^{A^{\times k}}.
\end{align*}
Thus $F_m(A)$ determines $A^{\times k}$ by
Theorem~\ref{T:main}, and then determines $A$ by
Lemma~\ref{L:iteration-roots}. This is also the case $d=1$,
$r_1=0$, $s_1=k$ of Theorem~\ref{T:iteration-reconstruction}.

For $G_m$ with $m\geq2$, take $d=m-1$, $r_i=0$, and $s_i=1$ in
Theorem~\ref{T:iteration-reconstruction}. Finally, apply
Theorem~\ref{T:main} successively $m$ times to recover $A$ from
$H_m(A)$.
\end{proof}

\begin{proposition}[Substitution]\label{P:iteration-substitution}
If $s$ and $t$ are reconstructing exponential terms, then the term
obtained by substituting $t$ for every occurrence of $x$ in $s$ is
reconstructing.
\end{proposition}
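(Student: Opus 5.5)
The plan is to use the fact that evaluation of a substituted term factors through evaluation of the inner term. Write $s[t]$ for the term obtained by replacing every occurrence of $x$ in $s$ by $t$. The key identity is
\[
   \bigl(s[t]\bigr)(A)=s\bigl(t(A)\bigr)
\]
for every finite ordered set $A$, where the right-hand side means the value of $s$ at the ordered set $t(A)$. Once this holds, the proposition follows in two lines. Suppose $s[t](A)\cong s[t](B)$. Then $s(t(A))\cong s(t(B))$, where $t(A)$ and $t(B)$ are finite ordered sets. Since $s$ is reconstructing, this gives $t(A)\cong t(B)$, and since $t$ is reconstructing, it gives $A\cong B$.

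I would prove the identity by structural induction on $s$, following the recursive definition of exponential terms.

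\emph{Base case.} For $s=x$, we have $s[t]=t$. So $s[t](A)=t(A)=x(t(A))$.

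\emph{Inductive step.} For $s=s_1^{s_2}$, substitution acts componentwise: $s[t]=s_1[t]^{\,s_2[t]}$. The definition of value, together with the induction hypothesis for $s_1$ and $s_2$, then gives
\[
   s[t](A)=s_1[t](A)^{s_2[t](A)}=s_1(t(A))^{s_2(t(A))}=s(t(A)).
\]
These are literal equalities, not merely isomorphisms, so no compatibility of exponentiation with isomorphism is even needed. If one preferred to work up to isomorphism, that compatibility is immediate anyway.

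I do not expect a genuine obstacle. The one point to check is that "reconstructing" is defined for all finite ordered sets, and $t(A)$ is always finite and nonempty. Finiteness holds because $Y^X$ is finite when $X$ and $Y$ are. Nonemptiness holds because constant maps exist. So the hypothesis on $s$ applies to the arguments $t(A)$ and $t(B)$. With this, the corollary's families and their substitutions, for example $H_m$-type towers built from $F$ or $G$ terms, inherit reconstruction.
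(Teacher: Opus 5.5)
Your proof is correct and is the paper's argument: the value of the substituted term at $A$ is $s(t(A))$, and you apply the reconstruction hypotheses for $s$ and then for $t$, using that $t(A)$ and $t(B)$ are finite. Your structural induction makes explicit the identity $(s[t])(A)=s(t(A))$, which the paper states without proof, and your nonemptiness check spells out a point the paper leaves implicit.
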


\begin{proof}
The value of the substituted term at $A$ is $s(t(A))$. Therefore,
\[
   s(t(A))\cong s(t(B))
      \quad\text{implies that}\quad t(A)\cong t(B)
      \quad\text{implies that}\quad A\cong B.
\]
All the ordered sets to which reconstruction is applied are finite.
\end{proof}

Replacing a term by one that has an isomorphic value on every finite
ordered set also preserves reconstruction. In particular, we may use
\eqref{E:iteration-interchange} inside any reconstructing term.

\begin{corollary}\label{C:iteration-four}
Every exponential term with at most four occurrences of $x$ is
reconstructing.
\end{corollary}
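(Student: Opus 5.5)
We enumerate the exponential terms with at most four occurrences of $x$ and show that each is reconstructing. For each term, we either identify it up to isomorphism of values with a member of the family in Theorem~\ref{T:iteration-reconstruction}, or we write it as a substitution of smaller reconstructing terms and apply Proposition~\ref{P:iteration-substitution}. Throughout, we may replace a term by another whose value is isomorphic on every finite ordered set, using \eqref{E:iteration-law} and \eqref{E:iteration-interchange}. Write $\ell$ for the number of occurrences of $x$.

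For $\ell=1$ the term is $x$, which is trivially reconstructing. For $\ell=2$ the term is $x^x$, which is Theorem~\ref{T:main}. For $\ell=3$ the terms are $(x^x)^x\cong x^{x\times x}$, which is $F_3$, and $x^{(x^x)}$, which is $G_3$; both are covered by Corollary~\ref{C:iteration-towers}.

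For $\ell=4$ there are five binary trees, which we reduce as follows.
\begin{enumerate}[(i)]
\item $((x^x)^x)^x\cong x^{x^{\times 3}}=F_4$.
\item $(x^x)^{(x^x)}$ is the substitution of $x^x$ into $x^x$, equivalently $H_2$.
\item $(x^{(x^x)})^x\cong x^{x\times x^x}$ is the case $d=1$ of Theorem~\ref{T:iteration-reconstruction}, which gives $E_1=A^{D_1}$ with $D_1=A^{\times r_1}\times A^{\times s_1}$. That case covers only products of copies of $A$, so we need more. Taking $d=2$, $r_1=0$, $s_1=1$ gives $E_1=A^A$. Then $r_2=1$, $s_2=1$ gives $D_2=A\times A^A$, so $E_2=A^{A\times A^A}$, which is exactly this term.
\item $x^{((x^x)^x)}\cong x^{x^{x\times x}}$ is the case $d=2$, $r_1=0$, $s_1=2$, which gives $E_1=A^{A\times A}$, followed by $r_2=0$, $s_2=1$.
\item $x^{(x^{(x^x)})}=G_4$.
\end{enumerate}

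Term (iii) is the only one that needs care: we must write $x\times x^x$ in the form $A^{\times r_2}\times E_1^{\times s_2}$ with $E_1=A^{A}$, and check that the parameter conditions $r_i\ge0$, $s_i\ge1$ hold. The main obstacle is making the enumeration complete. The number of parenthesizations with four leaves is the Catalan number $C_3=5$, and we will match the five trees listed above against it explicitly.
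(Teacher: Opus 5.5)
Your proposal is correct and follows essentially the paper's proof. It uses the same five trees, with $F_4$, $H_2$, $G_4$, and the case $d=2$, $r_1=r_2=0$, $s_1=2$, $s_2=1$ of Theorem~\ref{T:iteration-reconstruction} handling four of them.

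The one difference is the term $(x^{(x^x)})^x$. The paper reduces it to $H_2$ via $(A^{(A^A)})^A\cong A^{A\times(A^A)}\cong(A^A)^{(A^A)}$. Your choice $d=2$, $r_1=0$, $s_1=1$, $r_2=s_2=1$, giving $E_2(A)=A^{A\times A^A}$, works equally well. You should, however, delete the misleading opening remark in (iii) that calls this term a case of $d=1$.
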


\begin{proof}
One occurrence is immediate, and two occurrences are covered by
Theorem~\ref{T:main}. With three occurrences, the two terms are
$(x^x)^x$ and $x^{(x^x)}$, both covered by
Corollary~\ref{C:iteration-towers}.
With four occurrences, the five values at $A$ are
\[
\begin{gathered}
   \bigl((A^A)^A\bigr)^A,\\
   \bigl(A^{(A^A)}\bigr)^A,\\
   (A^A)^{(A^A)},\\
   A^{((A^A)^A)},\\
   A^{(A^{(A^A)})}.
\end{gathered}
\]
The first, third, and fifth are $F_4(A)$, $H_2(A)$, and $G_4(A)$,
respectively. The second is isomorphic to the third, since
\[
   \bigl(A^{(A^A)}\bigr)^A
      \cong A^{(A^A)\times A}
      \cong A^{A\times(A^A)}
      \cong(A^A)^{(A^A)}.
\]
For the fourth, take $d=2$, $r_1=r_2=0$, $s_1=2$, and $s_2=1$
in Theorem~\ref{T:iteration-reconstruction}. Then
$E_1(A)\cong(A^A)^A$ and $E_2(A)\cong A^{((A^A)^A)}$.
\end{proof}

The recursive family, the substitution principle, and the exponential
laws together give several sufficient conditions for reconstruction.
They do not settle the case of an arbitrary parenthesization. The
remaining question is whether every exponential term is reconstructing.

\section{History}\label{S:History}
The problem goes back to Dwight Duffus's 1978 thesis \cite{DufGH}.
In 1979, D.~Duffus and R.~Wille \cite{DufWilGI} proved the theorem
when both ordered sets are connected.

J.\,D. Farley returned to the problem in the 2020s.
He showed that the result still holds if only one of the ordered sets
is connected \cite{FarBC}, and proved it when $A$ has height~$1$
\cite{FarBF}.

Farley \cite{FarBF} also treated the case in which $A$ has a
maximum-height component that is absolutely $\Con$-indecomposable,
while $B$ has a maximum-height component that is directly
indecomposable. Further background and references appear in
\cite{FarBBa, FarBC}.

\end{document}